\documentclass[final]{siamltex}
\usepackage{amsmath,mathrsfs}
\usepackage{amssymb}
\usepackage{graphicx,subfigure,xspace}
\usepackage{pstricks}
\usepackage[sort,numbers]{natbib}
\usepackage[all]{xy}
\newcommand\subscr[2]{#1_{\textup{#2}}}
\newcommand\upscr[2]{#1^{\textup{#2}}}
\newcommand\upsubscr[3]{#1_{\textup{#2}}^{\textup{#3}}}

\newcommand{\integerspositive}{\mathbb{Z}_{>0}}
\newcommand{\integersnonnegative}{\mathbb{Z}_{\ge 0}}

\newcommand{\rrarrows}{\rightrightarrows}

\newcommand\wbda{\texttt{imbalance-correcting algorithm}\xspace}

\newcommand\wbmda{\texttt{mirror imbalance-correcting
    algorithm}\xspace}

\newcommand\cregular{\texttt{load-pushing algorithm}\xspace}

\newcommand\dslmo{\texttt{imbalance-correcting algorithm with
    self-loop addition}\xspace}

\newcommand{\din}{\subscr{d}{in}}
\newcommand{\wdin}{\upsubscr{d}{in}{w}}
\newcommand{\dout}{\subscr{d}{out}}
\newcommand{\wdout}{\upsubscr{d}{out}{w}}

\newcommand{\ds}{\operatorname{ds}}
\newcommand{\visited}{\operatorname{Vis}}

\newcommand{\Nin}{\upscr{\mathcal{N}}{in}}
\newcommand{\Nout}{\upscr{\mathcal{N}}{out}}

\newcommand{\setmap}[3]{#1:#2 \rrarrows #3}
\newcommand{\until}[1]{\{1,\ldots, #1\}}

\newtheorem{remark}[theorem]{Remark}
\newtheorem{example}[theorem]{Example}

\newcommand{\oprocendsymbol}{\hbox{$\bullet$}}
\newcommand{\oprocend}{\relax\ifmmode\else\unskip\hfill\fi\oprocendsymbol}
\def\eqoprocend{\tag*{$\bullet$}}
\newcommand{\longthmtitle}[1]{\mbox{}\textup{(#1):}}

\newcommand{\Adj}{\operatorname{Adj}}

\renewcommand{\tilde}{\widetilde}

\title{Distributed strategies for generating weight-balanced and
  doubly stochastic digraphs\thanks{This work was supported in part by
    NSF Awards CCF-0917166 and CMMI-0908508.  This manuscript
    substantially improves and extends the preliminary conference
    versions presented as~\cite{BG-JC:09-allerton}
    and~\cite{BG-JC:09-acc}.}}

\author{Bahman Gharesifard and Jorge Cort\'{e}s\thanks{Department of
    Mechanical and Aerospace Engineering, University of California San
    Diego, 9500 Gilman Dr, La Jolla, CA 92093-0411, United States, Ph.
    +1 858-822-7930, Fax. +1 858-822-3107,
    \texttt{\{bgharesifard,cortes\}@ucsd.edu}}}

\begin{document}

\maketitle

\begin{abstract}
  This paper deals with the design and analysis of dynamical systems
  on directed graphs (digraphs) that achieve weight-balanced and
  doubly stochastic assignments.  Weight-balanced and doubly
  stochastic digraphs are two classes of digraphs that play an
  essential role in a variety of coordination problems, including
  formation control, agreement, and distributed optimization.  We
  refer to a digraph as doubly stochasticable (weight-balanceable) if
  it admits a doubly stochastic (weight-balanced) adjacency
  matrix. This paper studies the characterization of both classes of
  digraphs, and introduces distributed dynamical systems to compute
  the appropriate set of weights in each case.  It is known that
  semiconnectedness is a necessary and sufficient condition for a
  digraph to be weight-balanceable. The first main contribution is a
  characterization of doubly-stochasticable digraphs. As a by-product,
  we unveil the connection of this class of digraphs with
  weight-balanceable digraphs.  The second main contribution is the
  synthesis of a distributed strategy running synchronously on a
  directed communication network that allows individual agents to
  balance their in- and out-degrees.  We show that a variation of our
  distributed procedure over the mirror graph has a much smaller time
  complexity than the currently available centralized algorithm based
  on the computation of the graph cycles.
  The final main contribution is the design of two cooperative
  strategies for finding a doubly stochastic weight assignment.  One
  algorithm works under the assumption that individual agents are
  allowed to add self-loops.  For the case when this assumption does
  not hold, we introduce an algorithm distributed over the mirror
  digraph which allows the agents to compute a doubly stochastic
  weight assignment if the digraph is doubly stochasticable and
  announce otherwise if it is not. 
  Various examples illustrate the results.
\end{abstract}

\begin{keywords} 
  Distributed dynamical systems, set-valued stability analysis,
  cooperative control, weight-balanced digraphs, doubly stochastic
  digraphs
\end{keywords}

\begin{AMS}
  93C55, 
  68M14, 
  68M10, 
  68W15, 
  94C15, 
  05C20 
\end{AMS}

\pagestyle{myheadings}%
\thispagestyle{plain}
\markboth{Bahman Gharesifard and Jorge Cort\'{e}s}{Distributed
  strategies for generating weight-balanced and doubly stochastic
  digraphs}

\section{Introduction}


In the last years there has been considerable interest in
understanding the underpinnings of collective behavior from a
dynamical systems perspective. A variety of phenomena from biology and
physics have been carefully studied, and more are coming into light. A
few examples of a vast literature include oscillator
synchronization~\cite{SHS:00,SHS:03,DJTS:10}, self-organization in
biological
systems~\cite{SG-SAL:93,JKP-SVV-DG:02,SC-JLD-NRF-JS-GT-EB:01}, and
animal grouping and aggregation~\cite{AO:86,SG-SAL-DIR:96}.  In the
study of collective behavior, a major concern is the modeling of the
interactions between individual agents and the understanding on how
local interconnections give rise to global emergent behavior. Such
coordination problems have also become a major research area in
engineering because of the connections with distributed robotics,
networked systems, and autonomy, see
e.g.,~\cite{WR-RWB:08,MM-ME:10,FB-JC-SM:08cor} and references therein.

This paper is a contribution to this buoying field.  From a systems
and controls perspective, one of the main objectives is the design of
dynamical systems that are distributed over a given interaction
topology and whose performance guarantees can be formally established
with regards to the task at hand.  In both analysis and design, the
interaction topology (or graph) of the underlying network is a key
element as it determines the information available to each
individual. Typically, directed interaction topologies (where
interactions among agents are unidirectional) pose greater technical
challenges.

In this paper, we study dynamical systems on two important classes of
directed graphs, weight-balanced and doubly stochastic digraphs.
A digraph is weight-balanced if, at each vertex, the sum of the
weights of the incoming edges is equal to the sum of the weights of
the outgoing edges. A digraph is doubly stochastic if it is
weight-balanced and these sums at each vertex are equal to one.  The
notion of weight-balanced digraph is key in establishing convergence
results of distributed algorithms for
average-consensus~\cite{ROS-JAF-RMM:07,ROS-RMM:03c} and consensus on
general functions~\cite{JC:08-auto} via Lyapunov stability analysis.
Weight-balanced digraphs also appear in the design of leader-follower
strategies under time delays~\cite{JH-YH:07}, virtual leader
strategies under asymmetric interactions~\cite{HS-LW-TC:06} and stable
flocking algorithms for agents with significant inertial
effects~\cite{DL-MWS:07}.
In~\cite{LHT:70}, a traffic-flow problem is introduced with $n$
junction and $m$ one-way streets with the goal of ensuring a smooth
traffic flow. It is shown that the problem can be reduced to computing
weights on the edges of the associated digraph so that it is
weight-balanced.
Furthermore, necessary and sufficient conditions are given for a
digraph to be weight-balanced and a centralized algorithm is presented
for computing the weight on each edge.
Doubly stochastic digraphs also play a key role in networked control
problems. Examples include distributed
averaging~\cite{FB-JC-SM:08cor,ROS-RMM:03c,WR-RWB:08,LX-SB:04} and
distributed convex
optimization~\cite{BJ-MR-MJ:09,AN-AO:09,MZ-SM:09}. Convergence in
gossip algorithms also relies on the structure of doubly stochastic
digraphs, see~\cite{SB-AG-BP-DS:06,JL-ASM-BDOA-CY:09}.

Because of the numerous algorithms available in the literature that
use weight-balanced and doubly stochastic interaction topologies, it
is important to develop distributed strategies that allow individual
agents to find the appropriate weight assignments, i.e., to balance
their in- and out-degrees, so that the overall interaction digraph is
weight-balanced or doubly stochastic.  In particular, we are
interested in designing discrete-time dynamical systems that can be
run on the directed communication network which, in finite time,
converge to a weight-balanced/doubly stochastic assignment.  As a
necessary step towards this goal, it is an important research question
to characterize when a digraph can be given an edge weight assignment
that makes it belong to either category. Our focus in this paper is on
nonzero weight assignments. In addition to its theoretical interest,
the consideration of nonzero weight assignments is also relevant from
a practical perspective, as the use of the maximum number of edges
leads to higher algebraic connectivity~\cite{CWW:05}, which in turn
affects positively the rate of
convergence~\cite{SB-AG-BP-DS:06,JL-ASM-BDOA-CY:09,MC-CWW:07,AG-SB:06}
of the algorithms typically executed over doubly stochastic digraphs.
Alternative versions of the problem, where some edge weights are
allowed to be zero, are also worth exploring, although we do not
consider them here. From a distributed perspective, such problems pose
nontrivial challenges, as individual agents would need a procedure to
determine if severing a particular edge (i.e., setting its weight to
zero) or set of them disconnects the overall digraph. Such procedures
would necessarily require information beyond the immediate
neighborhood of the agents.

Our contributions are threefold.  First, we obtain a characterization
of doubly stochasticable digraphs (a characterization of
weight-balanceable digraphs is already available in the literature,
cf.~\cite{LHT:70}).  As a by-product of our study, we demonstrate that
the set of doubly stochasticable digraphs can be generated by a
special subset of weight-balanced digraphs.  Second, we develop two
discrete-time set-valued dynamical systems on the directed
communication network which converge to a weight-balanced digraph in
finite time. The \wbda is a synchronized distributed strategy on a
digraph in which each agent provably balances her in- and out-degrees
in finite time. In this algorithm, each individual agent can send a
message to one of its out-neighbors and receive a message from her
in-neighbors.
The \wbmda is a provably correct distributed strategy over the mirror
digraph whose time complexity is much smaller than that of the
existing centralized strategy of~\cite{LHT:70} based on the
computation of all cycles of the digraph. Third, we synthesize
discrete-time dynamical systems to construct doubly stochastic
adjacency matrices. The \dslmo achieves this task under the assumption
that individual agents can add self-loops to the structure of the
digraph.  If this is not allowed, we introduce the \cregular, which is
a strategy distributed over the mirror digraph that allows agents to:
(i)~identify if their digraph is doubly stochasticable and, if this is
the case, (ii)~find a set of weights that makes the digraph doubly
stochastic. The algorithm relies on the notion of DS-character of a
strongly connected doubly stochasticable digraph and its design and
correctness analysis draw substantially on distributed solutions to
the maximum flow problem~\cite{RKA-TLM-JBO:93,TLP-IL-MB-SHD:05}.


This paper is organized as follows.
Section~\ref{section:mathematical_p} presents some mathematical
preliminaries.  Section~\ref{section:doubly_stochastic_sec} gives the
necessary and sufficient conditions for the existence of a doubly
stochastic adjacency matrix assignment for a given digraph.
Section~\ref{section:wb_algos} introduces two weight-balancing
algorithms that allow each agent to balance her in- and
out-degrees. We characterize their distributed character as well as
the convergence and complexity properties.  In 
Section~\ref{section:ds_dist}, we discuss the problem of designing
cooperative strategies that allow agents to find a doubly stochastic
edge weight assignment.  Finally, Section~\ref{sec:conclusion_sec}
contains our conclusions and ideas for future work.

\section{Mathematical preliminaries} \label{section:mathematical_p}

We adopt some basic notions from~\cite{FB-JC-SM:08cor,RD:05,NB:94}.  A
\emph{directed graph}, or simply \emph{digraph}, is a pair $G=(V,E)$,
where $V$ is a finite set called the vertex set and $ E \subseteq
V\times V $ is the edge set.  If $|V|=n$, i.e., the cardinality of $V$
is $ n\in \mathbb{Z}_{>0}$, we say that $G$ is of order $n$ which,
unless otherwise noted, is the standard assumption throughout the
paper.  We say that an edge $ (u,v) \in E $ is \emph{incident away
  from} $u$ (or an \emph{out-edge} of $ u $) and \emph{incident
  toward} $v$ (or an \emph{in-edge} of $ v $), and we call $ u $ an
\emph{in-neighbor} of $ v $ and $ v $ an \emph{out-neighbor} of $ u $.
We denote the set of in-neighbors and out-neighbors of $ v $,
respectively, with $ \upscr{\mathcal{N}}{in}(v) $ and $
\upscr{\mathcal{N}}{out}(v) $. The \emph{in-degree} and
\emph{out-degree} of $v$, denoted $\din(v)$ and $\dout(v)$, are the
number of in-neighbors and out-neighbors of $v$, respectively.  We
call a vertex $ v $ \emph{isolated} if it has zero in- and
out-degrees.  An \emph{undirected graph}, or simply \emph{graph}, is a
pair $G=(V,E)$, where $V$ is a finite set called the vertex set and
the edge set $E$ consists of unordered pairs of vertices. In a graph,
neighboring relationships are always bidirectional, and hence we
simply use neighbor, degree, etc. for the notions introduced above.  A
graph is \emph{regular} if each vertex has the same number of
neighbors. 
The
\emph{union} $G_1\cup G_2 $ of digraphs $ G_1=(V_1,E_1) $ and $
G_2=(V_2,E_2) $ is defined by $G_1\cup G_2=(V_1\cup V_2, E_1\cup
E_2)$.  The intersection of two digraphs can be defined similarly.  A
digraph $G$ is \emph{generated} by a set of digraphs $G_1, \dots, G_m$
if $ G=G_1\cup \dots \cup G_m $.  We let $ E^{-} \subseteq V\times V $
denote the set obtained by changing the order of the elements of $ E
$, i.e., $(v,u)\in E^{-}$ iff $ (u,v) \in E $.  The digraph
$\overline{G}=(V,E \cup E^{-})$ is the \emph{mirror} of $G$.

A \emph{weighted digraph} is a triplet $ G=(V,E,A) $, where $ (V,E) $
is a digraph and $ A \in \mathbb{R}^{n\times n}_{\geq0} $ is the
\emph{adjacency matrix}. We denote the entries of $ A $ by $ a_{ij} $,
where $ i,j\in\{1,\ldots,n \} $. The adjacency matrix has the property
that the entry $ a_{ij}>0 $ if $ (v_i,v_j)\in E $ and $ a_{ij}=0 $,
otherwise. If a matrix $A$ satisfies this property, we say that $ A $
can be assigned to the digraph $ G=(V,E)$.
Note that any digraph can be trivially seen as a weighted digraph by
assigning weight $1$ to each one of its edges. We will find it useful
to extend the definition of union of digraphs to weighted digraphs.
The union $G_1\cup G_2 $ of weighted digraphs $ G_1=(V_1,E_1,A_1) $
and $ G_2=(V_2,E_2,A_2) $ is defined by $G_1\cup G_2=(V_1\cup V_2,
E_1\cup E_2, A)$, where
\[
A|_{V_1\cap V_2}=A_1|_{V_1\cap V_2}+A_2|_{V_1\cap V_2},\quad
A|_{V_1\backslash V_2}=A_1, \quad A|_{V_2\backslash V_1}=A_2.
\]
For a weighted digraph, the weighted out-degree, weighted in-degree
and imbalance of $v_i$, $i \in \{1,\dots,n\}$, are respectively,
\[
\wdout(v_i) =\sum_{j=1}^{n}a_{ij}, \qquad \wdin(v_i)=\sum_{j=1}^n
a_{ji} , \qquad \omega(v_i)=\wdin(v_i)-\wdout(v_i).
\]
It is worth noticing that the imbalances across the graph always satisfy
\begin{align}
  \label{eq:basic-fact}
   \sum_{i=1}^n \omega(v_i) = 0.
\end{align}

\subsection{Graph connectivity notions}

A \emph{directed path} in a digraph, or in short path, is an ordered
sequence of vertices so that any two consecutive vertices in the
sequence are an edge of the digraph. A \emph{cycle} in a digraph is a
directed path that starts and ends at the same vertex and has no other
repeated vertex.  Hence, a self-loop is a cycle while an isolated
vertex is not.  Two cycles are \emph{disjoint} if they do not have any
vertex in common.  We denote by $\subscr{G}{cyc}$ a union of some
disjoint cycles of $G$ (note that $\subscr{G}{cyc}$ can be just one
cycle).  A \emph{semi-cycle} of a digraph is a cycle of its mirror
graph.  Note that a semi-cycle is a directed path in the mirror
digraph.  A digraph is called \emph{acyclic} if it contains no
cycles. A \emph{directed tree} is an acyclic digraph which contains a
vertex called the \emph{root} such that any other vertex of the
digraph can be reached by one and only one directed path starting at
the root. A \emph{breadth-first spanning tree $ \mathrm{BFS}(G,v) $ of
  a digraph $ G=(V,E) $ rooted at $ v \in V $} is a directed tree
rooted at $ v $ that contains a shortest path from $ v $ to every
other vertex of $ G $, see~\cite{FB-JC-SM:08cor, GT:01}.

A digraph is \emph{strongly connected} if there is a path between each
pair of distinct vertices and is \emph{strongly semiconnected} if the
existence of a path from $ v $ to $ w $ implies the existence of a
path from $ w $ to $ v $, for all $ v,w\in V $. Clearly, strong
connectedness implies strong semiconnectedness, but the converse is
not true.  The \emph{strongly connected components} of a directed
graph $ G $ are its maximal strongly connected subdigraphs.

\subsection{Basic notions from linear algebra}
A matrix $ A \in \mathbb{R}^{n\times n}_{\geq 0}$ is
\emph{weight-balanced} if $ \sum_{j=1}^{n}a_{ij}=\sum_{j=1}^{n}a_{ji}
$, for all $ i\in \{1,\ldots,n\} $.  A matrix $ A \in
\mathbb{R}^{n\times n}_{\geq 0}$ is \emph{row-stochastic} if each of
its rows sums 1. One can similarly define a column-stochastic matrix.
We denote the set of all row-stochastic matrices on $ \mathbb{R}_{\geq
  0}^{n\times n} $ by $ \mathrm{RStoc}(\mathbb{R}_{\geq 0}^{n\times
  n}) $.  A non-zero matrix $ A \in \mathbb{R}^{n\times n}_{\geq 0} $
is \emph{doubly stochastic} if it is both row-stochastic and
column-stochastic.  A matrix $ A \in \{0,1\}^{n\times n} $ is a
\emph{permutation matrix}, where $ n \in \mathbb{Z}_{\geq 1} $, if $ A
$ has exactly one entry $ 1 $ in each row and each column. A matrix $
A \in \mathbb{R}^{n\times n}_{\geq 0}$ is \emph{irreducible} if, for
any nontrivial partition $ J\cup K $ of the index set $ \{1, \ldots,
n\}$, there exist $ j \in J $ and $ k\in K $ such that $ a_{jk} \neq
0$. We denote by $ \mathrm{Irr}(\mathbb{R}_{\geq 0}^{n\times n}) $ the
set all irreducible matrices on $ \mathbb{R}_{\geq 0}^{n\times n} $.
Note that a weighted digraph $ G $ is strongly connected if and only
if its adjacency matrix is irreducible~\cite{NB:94}.

One can extend the adjacency matrix associated to a disjoint union of
cycles $ \subscr{G}{cyc}$ of $G$ to a matrix $ \subscr{A}{cyc}\in
\mathbb{R}^{n\times n}$ by adding zero rows and columns for the
vertices of $ G $ that are not included in $ \subscr{G}{cyc} $. Note
that the matrix $ \subscr{A}{cyc} $ is the adjacency matrix for a
subdigraph of $ G $. We call $ \subscr{A}{cyc}$ the \emph{extended
  adjacency matrix} associated to $ \subscr{G}{cyc} $.  The following
result establishes the relationship between cycles of $ G $ and
permutation matrices.
\begin{lemma}[Cycles and permutation matrices]\label{lemma:perm_cycle}
  The extended adjacency matrix associated to a union of disjoint
  cycles $ \subscr{G}{cyc} $ is a permutation matrix if and only if $
  \subscr{G}{cyc} $ contains all the vertices of $ G $.
\end{lemma}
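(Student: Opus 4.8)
The plan is to prove both implications by characterizing a permutation matrix through its row and column supports: $\subscr{A}{cyc}$ is a permutation matrix precisely when every row and every column contains exactly one nonzero entry, which, because each edge appearing in $\subscr{G}{cyc}$ carries weight $1$, must equal $1$. The entire argument then reduces to counting, for each vertex, how many in-edges and out-edges it has inside $\subscr{G}{cyc}$. The key structural fact I would record first is that in a single directed cycle every vertex lying on the cycle has exactly one in-neighbor and exactly one out-neighbor belonging to that cycle, with a self-loop at $v$ realizing this through the single edge $(v,v)$. Since $\subscr{G}{cyc}$ is a union of \emph{disjoint} cycles, the vertex sets of distinct cycles do not overlap, so any vertex lying in $\subscr{G}{cyc}$ lies in exactly one of these cycles and therefore has exactly one out-edge and one in-edge within $\subscr{G}{cyc}$. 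In matrix terms, the row indexed by such a vertex has exactly one entry equal to $1$, located in the column of its out-neighbor, and the associated column has exactly one $1$, located in the row of its in-neighbor.

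For the sufficiency direction, suppose $\subscr{G}{cyc}$ contains all $n$ vertices of $G$. Then the structural observation above applies to every index, so each row and each column of $\subscr{A}{cyc}$ has exactly one entry equal to $1$ and no other nonzero entries; hence $\subscr{A}{cyc}$ is a permutation matrix. For necessity I would argue by contraposition: if some vertex $v$ is not contained in $\subscr{G}{cyc}$, then by the very definition of the extended adjacency matrix the row and column associated with $v$ are identically zero, so $\subscr{A}{cyc}$ has no $1$ in that row (and none in that column) and thus cannot be a permutation matrix.

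The argument is essentially a bookkeeping exercise, so the only real care required, and the step I expect to be the subtlest, is confirming that the disjointness hypothesis is exactly what prevents a vertex from accumulating more than one in-edge or out-edge inside $\subscr{G}{cyc}$, and checking the degenerate cases, namely self-loops and the all-zero rows and columns introduced by the extension, against the two-sided requirement that a permutation matrix carry a single $1$ in \emph{both} each row and each column.
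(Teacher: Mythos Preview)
Your proposal is correct and follows essentially the same approach as the paper: the necessity direction is identical (a missing vertex yields a zero row, hence not a permutation matrix), and for sufficiency you spell out in detail what the paper simply declares ``clear,'' namely that disjointness of the cycles forces each vertex to have exactly one in-edge and one out-edge in $\subscr{G}{cyc}$.
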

\begin{proof}
  It is clear that if $ \subscr{G}{cyc} $ is a union of some disjoint
  cycles and contains all the vertices of $ G $, then the adjacency
  matrix associated to $ \subscr{G}{cyc} $ is a permutation
  matrix. Conversely, suppose that $ \subscr{G}{cyc} $ does not
  contain one of the vertices of $G$. Then the adjacency matrix
  associated to $ \subscr{G}{cyc} $ has a zero row and thus is not a
  permutation matrix.
\end{proof}

\subsection{Weight-balanced and doubly stochastic digraphs}

A weighted digraph $G$ is \emph{weight-balanced} (resp. \emph{doubly
  stochastic}) if its adjacency matrix is weight-balanced
(resp. doubly stochastic). Note that $ G $ is weight-balanced if and
only if $ \wdout(v) =\wdin(v) $, for all $ v \in V $.
A digraph is called \emph{weight-balanceable} (resp. \emph{doubly
  stochasticable}) if it admits a weight-balanced (resp. doubly
stochastic) adjacency matrix.  The following two results characterize
when a digraph is weight-balanceable.

\begin{theorem}[\cite{LHT:70}]\label{Theorem:Theorem_1_Loh}
  A digraph $ G=(V,E) $ is weight-balanceable if and only if the edge set
  $ E $ can be decomposed into $ k $ subsets $ E_1,\ldots, E_k $ such
  that 
  \begin{enumerate}
  \item $ E=E_1\cup E_2\cup \ldots \cup E_k $ and
  \item each $ G=(V,E_i) $, $ i=\{1,\ldots, k\} $, is
    weight-balanceable.
  \end{enumerate}
\end{theorem}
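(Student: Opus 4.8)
The plan is to treat the two implications separately, with essentially all of the content living in the reverse direction, which I would establish by a superposition argument on adjacency matrices.

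The forward implication is immediate: if $G$ is itself weight-balanceable, then the trivial decomposition $k=1$, $E_1=E$ satisfies both (i) and (ii), since $(V,E_1)=G$ is weight-balanceable by hypothesis. So I would dispatch this direction in one line.

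For the reverse implication, I would start from a decomposition $E=E_1\cup\dots\cup E_k$ in which each $(V,E_i)$ is weight-balanceable, choose for each $i$ a weight-balanced adjacency matrix $A_i$ that can be assigned to $(V,E_i)$, and form the candidate matrix $A=\sum_{i=1}^k A_i$. Two properties then need checking. First, that $A$ is weight-balanced: the defining identity $\sum_{j}(A_i)_{lj}=\sum_{j}(A_i)_{jl}$ holds for every vertex index $l$ and every $i$, and summing these identities over $i$ gives $\sum_j a_{lj}=\sum_j a_{jl}$. The essential point is that the weight-balance constraint is linear, so the set of weight-balanced nonnegative matrices is closed under sums (indeed a convex cone in $\mathbb{R}^{n\times n}_{\geq 0}$), and the construction $A=\sum_i A_i$ is exactly the weighted union of the digraphs $(V,E_i,A_i)$ specialized to adjacency matrices. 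Second, that $A$ is a legitimate adjacency matrix for $G=(V,E)$: since $E=\bigcup_i E_i$ forces each $E_i\subseteq E$, no $A_i$ carries a nonzero entry off the support of $E$, so $a_{lm}=0$ whenever $(v_l,v_m)\notin E$; conversely each edge $(v_l,v_m)\in E$ lies in some $E_i$, so $(A_i)_{lm}>0$, and nonnegativity of the remaining summands preserves $a_{lm}=\sum_i (A_i)_{lm}>0$. Together these two checks show that $A$ witnesses the weight-balanceability of $G$.

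The only step that requires genuine care is the support bookkeeping in the second check. One must invoke that each $A_i$ is assigned to $(V,E_i)$ in the precise sense of the adjacency-matrix definition (positive exactly on $E_i$, zero elsewhere), so that summing cannot manufacture spurious edges outside $E$, while nonnegativity guarantees that no true edge is cancelled down to zero in the sum. I do not anticipate a real obstacle here: the whole argument rests on the additivity of the weight-balance property combined with the fact that the union of weighted digraphs adds weights on shared edges, which is exactly what $A=\sum_i A_i$ encodes.
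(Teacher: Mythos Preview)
Your argument is correct. Note, however, that the paper does not supply its own proof of this theorem: it is quoted from~\cite{LHT:70}, and the only commentary the paper offers is that the proofs there ``are constructive but rely on the computation of all the cycles of the digraph,'' with the weight-balanced assignment obtained by ``taking the weighted union of these cycles.'' Your superposition $A=\sum_i A_i$ for the reverse implication is precisely this weighted-union idea, phrased directly at the level of adjacency matrices, so the two agree in substance. For the forward implication, the cycle-based route would invoke Theorem~\ref{Theorem:Theorem_2_Loh} to decompose $E$ into cycles (each trivially weight-balanceable), whereas your trivial decomposition $k=1$, $E_1=E$ is shorter and equally valid. Your support-bookkeeping check that $A$ is a genuine adjacency matrix for $(V,E)$ is careful and correct; the only implicit assumption you use is $E_i\subseteq E$, which follows from $E=\bigcup_i E_i$.
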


\begin{theorem}[\cite{LHT:70}]\label{Theorem:Theorem_2_Loh}
  Let $ G =(V,E)$ be a digraph. The following statements are
  equivalent:
  \begin{enumerate}
  \item $ G $ is weight-balanceable,
  \item Every element of $ E $ lies in a cycle,
  \item $ G $ is strongly semiconnected.
  \end{enumerate}
\end{theorem}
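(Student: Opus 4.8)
The plan is to establish the three equivalences by proving the cyclic chain of implications $(i)\Rightarrow(ii)\Rightarrow(iii)\Rightarrow(i)$. The implications $(ii)\Rightarrow(iii)$ and $(iii)\Rightarrow(i)$ are essentially combinatorial and lean on Theorem~\ref{Theorem:Theorem_1_Loh}, while the analytically delicate step is $(i)\Rightarrow(ii)$, which I expect to be the main obstacle.

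For $(i)\Rightarrow(ii)$, I would argue by contradiction. Suppose $G$ admits a weight-balanced adjacency matrix $A$ (so $a_{ij}>0$ exactly when $(v_i,v_j)\in E$), yet some edge $(u,v)$ lies in no cycle. Then there is no directed path from $v$ back to $u$. Let $R\subseteq V$ be the set of all vertices reachable from $v$; by the previous sentence $u\notin R$, while by construction no edge leaves $R$, since any such edge would extend reachability. The key computation is that for an arbitrary vertex subset $S$, summing the identity $\sum_{i\in S}\omega(v_i)=0$ and cancelling the internal edges shows that the total weight entering $S$ equals the total weight leaving $S$. Applying this to $S=R$: the weight leaving $R$ is zero, whereas the edge $(u,v)$ contributes positive weight $a_{uv}>0$ entering $R$, a contradiction. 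Hence every edge lies in a cycle.

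For $(ii)\Rightarrow(iii)$, assume every edge lies in a cycle and take any directed path $v=x_0,x_1,\dots,x_k=w$. Each edge $(x_j,x_{j+1})$ sits on a cycle, which supplies a directed path from $x_{j+1}$ back to $x_j$; concatenating these backward paths yields a walk, hence a path, from $w$ to $v$, establishing strong semiconnectedness. For $(iii)\Rightarrow(i)$, I would first note that strong semiconnectedness implies $(ii)$: an edge $(u,v)$ is itself a path from $u$ to $v$, so there is a path $P$ from $v$ to $u$, and closing $P$ with $(u,v)$ produces a cycle through $(u,v)$ (trimming repeated vertices if necessary to obtain a simple cycle). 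Given that every edge then lies in a cycle, I choose for each edge one such cycle; the subdigraph carrying the edges of each chosen cycle is weight-balanceable, since under unit weights every non-isolated vertex has weighted in- and out-degree one, and the union of these subdigraphs is all of $G$. Theorem~\ref{Theorem:Theorem_1_Loh} then yields that $G$ is weight-balanceable. Equivalently, summing the unit-weight extended adjacency matrices of the chosen cycles produces directly a weight-balanced matrix that is positive exactly on $E$.

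The crux is the cut and flow-conservation argument in $(i)\Rightarrow(ii)$: the clean identification of the reachable set $R$ as a set with no outgoing edges, together with the observation that weight balance forces conservation of weight across the boundary of $R$. The remaining steps are routine once the cycle-covering viewpoint and Theorem~\ref{Theorem:Theorem_1_Loh} are invoked.
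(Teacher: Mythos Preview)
Your argument is correct. Note that the paper does not itself prove this theorem---it is cited from~\cite{LHT:70}---and only remarks afterward that the original proof is constructive, assigning to each edge the number of cycles containing it to produce a weight-balanced assignment; this is precisely your $(iii)\Rightarrow(i)$ construction via Theorem~\ref{Theorem:Theorem_1_Loh}. Your flow-conservation/cut argument for $(i)\Rightarrow(ii)$ is the standard one and cleanly fills in what the paper leaves to the reference.
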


The proofs of these theorems are constructive but rely on the
computation of all the cycles of the digraph. The basic idea is that
if one can find all the cycles for a given strongly semiconnected
digraph, then, by taking the weighted union of these cycles (i.e., by
assigning to each edge the number of times it appears in the cycles),
one arrives at a weight-balanced assignment, see~\cite{LHT:70}.

Note that Theorem~\ref{Theorem:Theorem_2_Loh} implies that any
strongly semiconnected digraph can be generated by the cycles
contained in it. Therefore, it makes sense to define a minimal set of
cycles with this property. This motivates the introduction of the
following concept.

\begin{definition}[Principal cycle set]\label{def:independent_cycles}
  Let $ G $ be a strongly semiconnected digraph.  Let $ \mathcal{C}(G)
  $ denote the set of all subdigraphs of $ G $ that are either
  isolated vertices, cycles of $ G $, or a union of disjoint cycles of
  $ G $.  $P \subseteq \mathcal{C}(G) $ is a \emph{principal cycle
    set} of $ G $ if its elements generate $G$, and there is no subset
  of $ \mathcal{C}(G) $ with strictly smaller cardinality that
  satisfies this property.
\end{definition}

Note that there might exist more than one principal cycle
set. However, by definition, the cardinalities of all principal cycle
sets are the same. We denote this cardinality by $p(G)$.  Note that a
cycle, or a union of disjoint cycles, that contains all the vertices
has the maximum number of edges that an element of $\mathcal{C}(G) $
can have.  Thus these elements are the obvious candidates for
constructing a principal cycle set. Principal cycle sets give rise to
weight-balanced assignments, as we state next.

\begin{lemma}[Weight-balancing via principal cycle
  sets]\label{le:H_modified}
  Let $G$ be a strongly semiconnected digraph. Then, the union of the
  elements of a principal cycle set $P$ of $G$, considered as
  subdigraphs with trivial weight assignment, gives a set of positive
  integer weights which make the digraph weight-balanced.
\end{lemma}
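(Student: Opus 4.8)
The plan is to show that taking the union of all elements of a principal cycle set, with each element carrying its trivial (all-ones) weight assignment, produces a weight-balanced assignment whose weights are positive integers. The key observation is that each individual element of the principal cycle set—being either a single cycle, a union of disjoint cycles, or an isolated vertex—is itself weight-balanced with the trivial weight assignment, and that weight-balancedness is preserved under the weighted union operation defined in the preliminaries.

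**First I would** verify the base case: each element $P_i$ of the principal cycle set $P$, viewed as a subdigraph with weight $1$ on each of its edges, is weight-balanced. This is immediate from the structure of cycles. In any single cycle, every vertex appearing in the cycle has exactly one incoming and one outgoing edge within that cycle, so $\wdin = \wdout = 1$ at each such vertex, while vertices not in the cycle have $\wdin = \wdout = 0$ via the extended adjacency matrix (adding zero rows and columns). A union of disjoint cycles inherits this property, since disjointness means the per-vertex degree count is never larger than one on each side; and an isolated vertex trivially satisfies $\wdin = \wdout = 0$. Hence each extended adjacency matrix $A_i$ associated to $P_i$ is weight-balanced.

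**Next I would** argue that summing these weight-balanced matrices preserves weight-balancedness. By the definition of the principal cycle set, the elements of $P$ generate $G$, so $G = P_1 \cup \cdots \cup P_{p(G)}$, and by the extended-union definition the resulting adjacency matrix is $A = \sum_{i} A_i$ (all $A_i$ share the common vertex set $V$ after extension). Since $\wdin(v_i) = \sum_j a_{ji}$ and $\wdout(v_i) = \sum_j a_{ij}$ are linear in the entries of $A$, and each summand $A_i$ satisfies $\sum_j (A_i)_{ij} = \sum_j (A_i)_{ji}$ for every $i$, the sum also satisfies this balance at every vertex. Therefore $A$ is weight-balanced. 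The integrality and positivity of the weights follow because each $A_i$ has entries in $\{0,1\}$ and, because the elements generate $G$, every edge of $G$ lies in at least one $P_i$, so each entry $a_{ij}$ corresponding to an edge is a positive integer (namely the number of elements of $P$ containing that edge).

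**The main obstacle** I anticipate is the careful handling of the extension and summation: one must ensure that after extending each element's adjacency matrix to the full vertex set $V$, the union operation genuinely corresponds to matrix addition on overlapping vertices, and that an edge shared by several elements accumulates its weight additively rather than being overwritten. The definition of weighted union in the preliminaries handles exactly this case with $A|_{V_1 \cap V_2} = A_1|_{V_1 \cap V_2} + A_2|_{V_1 \cap V_2}$, so the obstacle is largely bookkeeping rather than a genuine conceptual difficulty. One should also confirm that no edge weight ends up zero on an actual edge of $G$, which is precisely guaranteed by the generating property of the principal cycle set.
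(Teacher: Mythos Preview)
Your proposal is correct and follows essentially the same approach as the paper: observe that each element of the principal cycle set is weight-balanced under the trivial assignment, and that taking the weighted union preserves balance. The only difference is that the paper invokes Theorem~\ref{Theorem:Theorem_1_Loh} for the second step, whereas you unpack it directly via linearity of the row and column sums; your version is more self-contained but otherwise identical in spirit.
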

\begin{proof}
  Since each element of a principal cycle $P$ is either an isolated
  vertex, a cycle, or union of disjoint cycles, it is
  weight-balanced. By definition, $G$ can be written as the union of
  the elements of $P$.  Thus by Theorem~\ref{Theorem:Theorem_1_Loh},
  the weighted union of the elements of $P$ gives a set of weights
  makes the digraph $G$ weight-balanced.
\end{proof}

In general, the assignment in Lemma~\ref{le:H_modified} uses fewer
number of cycles than the ones used in
Theorem~\ref{Theorem:Theorem_2_Loh}.

\subsection{Discrete set-valued analysis}

Here, we provide a brief exposition of useful concepts from
discrete-time set-valued dynamical systems
following~\cite{DGL:84,FB-JC-SM:08cor}. For $ X \subseteq
\mathbb{R}^n$, let $\setmap{F}{X}{X}$ denote a set-valued map that
takes a point in $X$ to a subset $F(x)$ of $X$. $F$ is non-empty if $
F(x) \neq \emptyset$, for all $ x \in X $.  A point $ x^* \in X $ is a
\emph{fixed point} of $ F $ if $ x^* \in F(x^*) $.  An
\emph{evolution} of $ F $ on $ X $ is any trajectory $ \gamma:
\mathbb{Z}_{\geq0} \rightarrow X $ such that
\begin{align*}
  \gamma(k+1)\in F(\gamma(k)) , \quad \text{for all } k \in
  \mathbb{Z}_{\geq0} .
\end{align*}
The map $ F $ is \emph{closed} at $ x \in X $ if, for any two
convergent sequences $ \{x_k\}_{k=0}^{\infty} $ and
$\{y_k\}_{k=0}^{\infty} $, with $ \lim_{k\rightarrow \infty} x_k =x$,
$ \lim_{k\rightarrow \infty} y_k =y$, and $ y_k \in F(x_k) $, for all
$ k \in \mathbb{Z}_{\geq0} $, we have $ y \in F(x) $.  The map $F$ is
closed on $X$ if it is closed at $x$, for all $x \in X$.
A set $ W \subset X $ is \emph{weakly positively invariant} with
respect to $ F $ if for any $ x \in W $ there exists $ y \in W $ such
that $ y \in F(x) $. $ W $ is \emph{strongly positively invariant}
with respect to $ F $ if $ F(x) \subset W $, for all $ x \in W $.
Finally, a continuous function $ V: X\rightarrow \mathbb{R} $ is
called \emph{non-increasing} along $ F $ in $ W \subset X $ if $
V(y)\leq V(x) $, for all $ x \in W $ and $ y \in F(x) $.  Equipped
with these tools, one can formulate the following set-valued version
of the LaSalle invariance principle, which will be most useful in the
developments later.

\begin{theorem}[LaSalle invariance principle for discrete-time
  set-valued dynamical systems]\label{theorem:LaSalle}
  Let $\setmap{F}{X}{X}$ be a set-valued map on $ X \subset
  \mathbb{R}^n $ and let $ W \subset X $ be a closed and strongly
  positively invariant with respect to $ F $.  Suppose $ F $ is
  non-empty and closed on $ W $ and all evolutions of $ F $ with
  initial condition in $ W $ are bounded. Let $ V: X\rightarrow
  \mathbb{R}$ be continuous and non-increasing function along $ F $ on
  $ W$. Then, any evolution of $F$ with initial condition in $ W $
  approaches a set of the form $ S \cap V^{-1}(c) $, where $ c \in
  \mathbb{R} $ and $ S $ is the largest weakly positively invariant
  set contained in $ \{x \in W \ | \ \mathrm{there} \ \mathrm{exists}
  \ y \in F(x) \ \mathrm{such} \ \mathrm{that} \ V(x)=V(y)\} $.
\end{theorem}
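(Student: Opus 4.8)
The plan is to follow the classical structure of LaSalle's argument, adapted to the discrete-time set-valued setting, by analyzing the $\omega$-limit set of an arbitrary evolution. Fix an evolution $\gamma: \mathbb{Z}_{\geq 0} \to X$ with $\gamma(0) \in W$. Since $W$ is strongly positively invariant, $F(x) \subset W$ for all $x \in W$, so a straightforward induction gives $\gamma(k) \in W$ for every $k$. Because $W$ is closed and the evolution is bounded by hypothesis, the $\omega$-limit set $\Omega(\gamma)$ (the set of all subsequential limits of $\gamma$) is non-empty, compact, and contained in $W$. I would first record that $V$ decreases along the trajectory: since $\gamma(k) \in W$ and $\gamma(k+1) \in F(\gamma(k))$, the non-increasing hypothesis gives $V(\gamma(k+1)) \leq V(\gamma(k))$, so the real sequence $\{V(\gamma(k))\}_k$ is monotone non-increasing. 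As $\gamma$ is bounded and $V$ is continuous, this sequence is bounded below and hence converges to some $c \in \mathbb{R}$.

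Next I would show that $V \equiv c$ on $\Omega(\gamma)$. Given $x^* \in \Omega(\gamma)$, pick a subsequence $\gamma(k_j) \to x^*$; continuity of $V$ then yields $V(x^*) = \lim_j V(\gamma(k_j)) = c$, so that $\Omega(\gamma) \subseteq V^{-1}(c)$. The key step is to prove that $\Omega(\gamma)$ is weakly positively invariant with respect to $F$, and this is exactly where the closedness hypothesis on $F$ is used. Take $x^* \in \Omega(\gamma)$ with $\gamma(k_j) \to x^*$. The successor points satisfy $\gamma(k_j+1) \in F(\gamma(k_j))$ and form a bounded sequence, so after passing to a further subsequence they converge to some $y^* \in \Omega(\gamma)$. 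Applying the closedness of $F$ to the convergent pair $\gamma(k_j) \to x^*$, $\gamma(k_j+1) \to y^*$ with $\gamma(k_j+1) \in F(\gamma(k_j))$ gives $y^* \in F(x^*)$. Thus for every $x^* \in \Omega(\gamma)$ there exists $y^* \in F(x^*) \cap \Omega(\gamma)$, which is precisely weak positive invariance of $\Omega(\gamma)$.

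To conclude, I would combine these facts. The element $y^*$ produced above also satisfies $V(y^*) = c = V(x^*)$, so every $x^* \in \Omega(\gamma)$ lies in the set $\{x \in W \mid \text{there exists } y \in F(x) \text{ such that } V(x) = V(y)\}$. Hence $\Omega(\gamma)$ is a weakly positively invariant set contained in this set, and by maximality of $S$ we obtain $\Omega(\gamma) \subseteq S$. Together with $\Omega(\gamma) \subseteq V^{-1}(c)$ this gives $\Omega(\gamma) \subseteq S \cap V^{-1}(c)$. Finally, since $\gamma$ is a bounded trajectory and $\Omega(\gamma)$ is compact, the evolution approaches its $\omega$-limit set in the sense that the distance from $\gamma(k)$ to $\Omega(\gamma)$ tends to zero, and therefore $\gamma$ approaches $S \cap V^{-1}(c)$, as claimed.

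I expect the main obstacle to be the weak-positive-invariance step: it requires a careful double extraction of subsequences — first to reach $x^*$, then to force the successor points to converge — followed by an application of the closedness definition to exactly the right pair of convergent sequences. A secondary technical point, relying on compactness of $\Omega(\gamma)$ and boundedness of the trajectory, is justifying that a bounded evolution genuinely approaches its $\omega$-limit set rather than merely accumulating on it.
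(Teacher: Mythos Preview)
The paper does not actually prove this theorem: it is stated as a preliminary result drawn from~\cite{DGL:84,FB-JC-SM:08cor}, with no proof given in the text. Your argument is correct and is precisely the classical proof one finds in those references --- analyze the $\omega$-limit set of a bounded evolution, use monotonicity plus continuity of $V$ to pin it to a level set, and invoke closedness of $F$ via subsequence extraction to obtain weak positive invariance --- so there is nothing to compare beyond noting that your write-up supplies what the paper omits.
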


\section{When does a digraph admit a doubly stochastic weight
  assignment?}\label{section:doubly_stochastic_sec}

Our main goal in this section is to obtain necessary and sufficient
conditions that characterize when a digraph is doubly
stochasticable. This characterization is a necessary step before
addressing in later sections the design of distributed dynamical
systems that find doubly stochastic weight assignments. As an
intermediate step of this characterization, we will also find it
useful to study the relationship between weight-balanced and doubly
stochastic digraphs.

Note that strong semiconnectedness is a necessary, and sufficient,
condition for a digraph to be weight-balanceable. All doubly
stochastic digraphs are weight-balanced; thus a necessary condition
for a digraph to be doubly stochasticable is strong
semiconnectedness. Moreover, weight-balanceable digraphs that are
doubly stochasticable do not have any isolated vertex.  However, none
of these conditions is sufficient.  A simple example illustrates
this. Consider the digraph shown in
Figure~\ref{fig:ds_digraph_ex1}. Note that this digraph is strongly
connected; thus there exists a set of positive weights which makes the
digraph weight-balanced.  However, there exists no set of nonzero
weights that makes this digraph doubly stochastic.
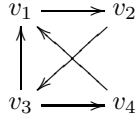
\begin{figure}
  \[
  \xymatrix{v_1 \ar[r] & v_2 \ar[dl]\\
    v_3 \ar[u] \ar[r] & v_4 \ar[lu] }
  \]
  \caption{A weight-balanceable digraph for which their exists no
    doubly stochastic adjacency assignment.}
  \label{fig:ds_digraph_ex1}
\end{figure}
Suppose
\[
A=\left(\begin{array}{cccc}0 & \alpha_1 & 0 &0\\
    0 & 0 & \alpha_2 & 0\\
    \alpha_3 & 0 & 0 & \alpha_4\\
    \alpha_5 & 0 & 0 & 0
  \end{array}\right), 
\]
where $ \alpha_i \in \mathbb{R}_{>0} $, for all $ i\in\{1,\ldots, 5\}
$, is a doubly stochastic adjacency assignment for this digraph. Then
a simple computation shows that the only solution that makes the
digraph doubly stochastic is by choosing $ \alpha_3=0 $, which is not
possible by assumption. Thus this digraph is not doubly
stochasticable. 

The following result will simplify our analysis by allowing us to
restrict our attention to strongly connected digraphs.

\begin{lemma}[Strongly connected components of a doubly stochasticable
  digraph]\label{le:decomp}
  A strongly semiconnected digraph is doubly stochasticable if and
  only if all of its strongly connected components are doubly
  stochasticable.
\end{lemma}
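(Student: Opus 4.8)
The plan is to reduce everything to a single structural observation: in a strongly semiconnected digraph there can be no edge joining two distinct strongly connected components. I would establish this first. Suppose $(v_i,v_j)\in E$ with $v_i$ and $v_j$ lying in distinct strongly connected components. The edge itself is a path from $v_i$ to $v_j$, so strong semiconnectedness guarantees a path from $v_j$ back to $v_i$; concatenating the two paths shows that $v_i$ and $v_j$ are mutually reachable and hence belong to the same strongly connected component, a contradiction. Consequently, after reordering the vertices so that those of each component are consecutive, \emph{any} adjacency matrix assignable to $G$ --- in particular any candidate doubly stochastic one --- is block diagonal, with one diagonal block $A_\ell$ per strongly connected component $G_\ell$, and each $A_\ell$ is precisely an adjacency matrix assignable to $G_\ell$.

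With this normal form in hand, both implications reduce to bookkeeping about block-diagonal matrices. For the forward direction, suppose $A$ is a doubly stochastic assignment for $G$. Because $A$ is block diagonal, the $i$-th row sum of $A$ coincides with the $i$-th row sum of the block containing index $i$, and likewise for columns. Hence each $A_\ell$ inherits row- and column-stochasticity from $A$, so each $A_\ell$ is doubly stochastic and each $G_\ell$ is doubly stochasticable. For the converse, suppose every component $G_\ell$ admits a doubly stochastic assignment $A_\ell$. Assembling these as the diagonal blocks of a matrix $A$, with zeros off the blocks, yields --- by the absence of inter-component edges --- a matrix whose positive entries are exactly the edges of $G$; each of its rows and columns sums to one since each block is doubly stochastic, so $A$ is a doubly stochastic assignment for $G$.

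The only real content lies in the first paragraph: once no edge crosses between components, a doubly stochastic weight can neither leak weighted degree across components nor be forced to do so, which is exactly what makes the componentwise reduction valid. I expect the main (and essentially the only) obstacle to be arguing the block-diagonal structure carefully, including the degenerate case in which a component is a single vertex. Such a component is doubly stochasticable precisely when it carries a self-loop, matching the earlier observation that a doubly stochasticable digraph has no isolated vertices; this case is handled automatically by the same block-diagonal argument, since the corresponding $1\times 1$ block must equal $1$.
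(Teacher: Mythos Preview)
Your proof is correct and follows essentially the same approach as the paper: both arguments hinge on the observation that strong semiconnectedness forbids edges between distinct strongly connected components, forcing any assignable adjacency matrix to be block diagonal, from which the equivalence follows. Your version is in fact more explicit than the paper's, which simply asserts ``the result follows'' after establishing block-diagonality without spelling out either direction or the singleton-component case.
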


\begin{proof}
  Let $G_1$ and $G_2$ be two strongly connected components of the
  digraph. Note that there can be no edges from $v_1 \in G_1$ to $v_2
  \in G_2$ (or vice versa). If this were the case, then the strong
  semiconnectedness of the digraph would imply that there is a path
  from $v_2$ to $v_1$ in the digraph, and hence $G_1 \cup G_2$ would
  be strongly connected, contradicting the fact that $G_1$ and $G_2$
  are maximal.  Therefore, the adjacency matrix of the digraph is a
  block-diagonal matrix, where each block corresponds to the adjacency
  matrix of a strongly connected component, and the result follows.
\end{proof}

As a result of Lemma~\ref{le:decomp}, we are interested in
characterizing the class of strongly connected digraphs which are
doubly stochasticable.

\subsection{The relationship between weight-balanced and doubly
  stochastic adjacency matrices}\label{section:wb_ds}
  
As an intermediate step of the characterization of doubly
stochasticable digraphs, we will find it useful to study the
relationship between weight-balanced and doubly stochastic digraphs.
The example in Figure~\ref{fig:ds_digraph_ex1} underscores the
importance of characterizing the set of weight-balanceable digraphs
that are also doubly-stochasticable.

We start by introducing the \emph{row-stochastic normalization map} $
\phi: \mathrm{Irr}(\mathbb{R}_{\geq 0}^{n\times n})\rightarrow
\mathrm{RStoc}(\mathbb{R}_{\geq 0}^{n\times n}) $ defined by
\[
\phi: (a_{ij}) \mapsto \left( \frac{a_{ij}}{\sum_{l=1}^na_{il}}
\right) .
\]
Note that, for $ A \in \mathrm{Irr}(\mathbb{R}_{\geq 0}^{n\times n})
$, $\phi (A)$ is doubly stochastic if and only if
\[
\sum_{i=1}^n\frac{a_{ij}}{\sum_{l=1}^na_{il}}=1,
\]
for all $ j\in \{1,\dots, n\} $.  The following result characterizes
when the digraph associated with an irreducible weight-balanced
adjacency matrix is doubly stochasticable.

\begin{theorem}[Weight-balanced and doubly stochasticable
  digraphs]\label{theorem:dsable_wbs}
  Let $ A \in \mathrm{Irr}(\mathbb{R}_{\geq 0}^{n\times n}) $ be an
  adjacency matrix associated to a strongly connected weight-balanced
  digraph.  Then $ \phi(A) $ is doubly stochastic if and only if $
  \sum_{l=1}^na_{il}=C $, for all $ i \in \{1,\ldots, n\} $, for some
  $ C \in \mathbb{R}_{> 0}$.
\end{theorem}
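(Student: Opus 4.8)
The plan is to reduce the statement to a uniqueness property of the Perron--Frobenius eigenvector. Throughout I would write $r_i = \sum_{l=1}^n a_{il} = \wdout(v_i)$ for the weighted out-degree, noting that strong connectivity guarantees $r_i > 0$ for all $i$, so that $\phi$ is well defined. Set $M = \phi(A)$, so $M_{ij} = a_{ij}/r_i$. Since $M$ has exactly the same zero/nonzero pattern as $A$, it is again irreducible, and by construction it is row-stochastic, so $M\mathbf{1} = \mathbf{1}$ with $\mathbf{1} = (1,\ldots,1)^T$.

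For the ``if'' direction I would simply substitute: if $r_i = C$ for all $i$, then the $j$-th column sum of $M$ is $\sum_{i=1}^n a_{ij}/C = \wdin(v_j)/C$, which equals $\wdout(v_j)/C = C/C = 1$ by the weight-balance hypothesis. Hence $M$ is column-stochastic as well, i.e. doubly stochastic. This direction needs nothing beyond the definitions.

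The substance is in the ``only if'' direction, and the key observation is that the vector $r = (r_1,\ldots,r_n)$ is a \emph{left} eigenvector of $M$ for the eigenvalue $1$: indeed $(r^T M)_j = \sum_{i=1}^n r_i (a_{ij}/r_i) = \sum_{i=1}^n a_{ij} = \wdin(v_j) = \wdout(v_j) = r_j$, where weight-balance is used in the penultimate equality. Since $M$ is nonnegative and irreducible, I would invoke the Perron--Frobenius theorem: the spectral radius, which here equals $1$ because $M$ is row-stochastic, is a simple eigenvalue whose left eigenspace is one-dimensional and spanned by a strictly positive vector. Thus $r$ is, up to a positive scalar, the unique positive left eigenvector of $M$ at the eigenvalue $1$.

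To finish I would use the hypothesis that $M = \phi(A)$ is doubly stochastic, which says precisely that $\mathbf{1}$ is \emph{also} a left eigenvector of $M$ for the eigenvalue $1$ (every column sums to one). Both $r$ and $\mathbf{1}$ then lie in the one-dimensional left eigenspace, so $r = C\mathbf{1}$ for some scalar $C$, and positivity of the $r_i$ forces $C > 0$, which is exactly $\sum_{l=1}^n a_{il} = C$ for all $i$. The main obstacle, and the step that really does the work, is establishing uniqueness of the positive left eigenvector; I rely on irreducibility of $A$ (hence of $M$) to license the Perron--Frobenius conclusion. An elementary maximum-principle argument on the relations $\sum_i a_{ij}/r_i = 1$ would work too, but it essentially re-proves Perron--Frobenius, so I prefer to cite the theorem directly.
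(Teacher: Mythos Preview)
Your argument is correct, but it follows a different route from the paper's own proof. The paper handles the ``only if'' direction by an elementary minimum argument: writing $C_i=\sum_l a_{il}$, it subtracts the weight-balance identity $\sum_i a_{ij}/C_j=1$ from the column-stochasticity identity $\sum_i a_{ij}/C_i=1$ to obtain $\sum_i a_{ij}\big(\tfrac{1}{C_i}-\tfrac{1}{C_j}\big)=0$ for each $j$, and then takes $j$ achieving $\min_k C_k$ so that every summand is nonpositive; this forces $a_{ij}=0$ whenever $C_i>C_j$, and irreducibility is contradicted unless all $C_i$ coincide. Your proof instead recognizes that $r=(C_1,\ldots,C_n)$ is a left Perron eigenvector of the row-normalized matrix $M=\phi(A)$ (precisely because of weight-balance), and that double stochasticity makes $\mathbf{1}$ a second left eigenvector at the spectral radius, so Perron--Frobenius uniqueness forces $r\in\operatorname{span}\{\mathbf{1}\}$. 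Your approach is shorter and more conceptual once one is willing to invoke Perron--Frobenius; the paper's approach is fully self-contained and, as you observe, amounts to running the maximum-principle step of the Perron--Frobenius uniqueness proof by hand in this particular setting.
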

\begin{proof}
  The implication from right to left is immediate. Suppose then that $
  A $ is associated to a strongly connected weight-balanced
  digraph. Then we need to show that if $ A $ satisfies the following
  set of equations
  \begin{align*}
    \sum_{l=1}^na_{jl}=\sum_{l=1}^na_{lj},\qquad
    \sum_{i=1}^n\frac{a_{ij}}{\sum_{l=1}^na_{il}}=1,
  \end{align*}
  for all $ j\in\{1,\ldots, n\} $, there exists $ C \in
  \mathbb{R}_{> 0} $ such that $ \sum_{l=1}^na_{il}=C $, for all $ i
  \in \{1,\ldots, n\} $. Let $ C_{k}=\sum_{l=1}^na_{kl}$,
  $ k\in\{1,\ldots, n\} $. Then the doubly stochastic conditions can
  be written as
  \begin{equation}\label{eq:ds}
    \frac{a_{1j}}{C_1}+\frac{a_{2j}}{C_2}+\dots+\frac{a_{nj}}{C_n}=1,
  \end{equation}
  for all $ j\in \{1,\ldots, n\} $. Note that $ C_k\neq 0 $, for all $
  k\in\{1,\ldots, n\} $, since $ A $ is irreducible. By
  the weight-balanced assumption, we have
  \[
  a_{1j}+a_{2j}+\dots+a_{nj}=C_j,
  \]
  for all $ j\in \{1,\ldots, n\} $. Thus
  \begin{equation}\label{eq:wb}
    \frac{a_{1j}}{C_j}+\frac{a_{2j}}{C_j}+\dots+\frac{a_{nj}}{C_j}=1,
  \end{equation}
  From~\eqref{eq:ds} and~\eqref{eq:wb}, we have
  \begin{equation}\label{eq:main}
    a_{1j}\left(\frac{1}{C_1}-\frac{1}{C_j}\right)+\dots+a_{nj}  
    \left(\frac{1}{C_n}-\frac{1}{C_j}\right)
    =0,
  \end{equation}
  for all $ j\in \{1,\ldots, n\} $.  Suppose that, up to rearranging,
  \[
  C_1=\min_{k}\{C_k \ | \ k\in\{1,\ldots, n\}\},
  \]
  and, $ 0<C_1<C_i $, for all $ i \in\{2,\dots, n\}$. 
  %
  %
  Then~\eqref{eq:main} gives
  \[
  a_{21}\left(\frac{1}{C_2} - \frac{1}{C_1}\right) + \dots +
  a_{n1}\left(\frac{1}{C_n}-\frac{1}{C_1}\right)=0;
  \]
  thus $ a_{j1}=0 $, for all $ j\in\{2,\dots, n\} $, which contradicts
  the irreducibility assumption. If the set $ \{C_k\}_{k=1}^n $ has
  more than one element giving the minimum, the proof follows a
  similar argument.  Suppose
  \[
  C_1=C_2=\min_{k}\{C_k \ | \ k\in\{1,\ldots, n\}\},
  \]
  and suppose that $ 0<C_1 = C_{2}<C_i $, for all $ i \in\{3,\dots,
  n\}$. Then we have
  \[
  a_{31}\left(\frac{1}{C_3} - \frac{1}{C_1}\right) + \dots +
  a_{n1}\left(\frac{1}{C_n}-\frac{1}{C_1}\right)=0,
  \]  
  and
  \[
  a_{32}\left(\frac{1}{C_3} - \frac{1}{C_2}\right) + \dots +
  a_{n2}\left(\frac{1}{C_n}-\frac{1}{C_2}\right)=0,
  \]  
  and thus $a_{j1}=0=a_{j2}$, for all $j \in \{3, \dots,n \}$, which
  contradicts the irreducibility assumption. The same argument holds
  for an arbitrary number of minima.
\end{proof}

\begin{corollary}[Self-loop addition makes a digraph doubly
  stochasticable]\label{corollary:3_adding_loop}
  Any strongly connected digraph is doubly stochasticable after adding
  enough number of self-loops.
\end{corollary}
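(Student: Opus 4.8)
The plan is to reduce the statement to Theorem~\ref{theorem:dsable_wbs} by viewing self-loops as free ``padding'' that equalizes the weighted degrees without destroying weight-balance. First I would invoke Theorem~\ref{Theorem:Theorem_2_Loh}: a strongly connected digraph $G$ is in particular strongly semiconnected, hence weight-balanceable, so there exists a weight-balanced adjacency matrix $A$ assignable to $G$. Since $G$ is strongly connected, $A$ is irreducible, i.e.\ $A \in \mathrm{Irr}(\mathbb{R}_{\geq 0}^{n\times n})$, and $\wdin(v_i) = \wdout(v_i)$ for every $i$.

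The key observation I would exploit is that a self-loop at $v_i$ contributes its weight to the diagonal entry $a_{ii}$, which appears in both $\wdout(v_i) = \sum_l a_{il}$ and $\wdin(v_i) = \sum_l a_{li}$. Hence adding a self-loop increases the weighted in- and out-degrees of $v_i$ by the same amount and preserves weight-balance; it also leaves the off-diagonal pattern unchanged, so it preserves irreducibility (the irreducibility condition only constrains entries $a_{jk}$ with $j \neq k$). I would then fix any constant $C > \max_i \sum_{l=1}^n a_{il}$ and, at each vertex $v_i$, add a self-loop of weight $C - \sum_{l=1}^n a_{il} > 0$, obtaining a new adjacency matrix $A'$ assignable to the digraph $G'$ consisting of $G$ together with these self-loops.

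By construction $A'$ is still irreducible and weight-balanced, and every row of $A'$ now sums to the common value $C$. Theorem~\ref{theorem:dsable_wbs} then applies directly and yields that $\phi(A')$ is doubly stochastic. Since $\phi(A')_{ij} = a'_{ij}/C$ preserves the zero/nonzero pattern of $A'$, it is a valid adjacency assignment for $G'$, so $G'$ is doubly stochasticable, which is exactly the claim.

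The individual steps here are routine; the only point requiring a moment's care is verifying that self-loop addition keeps the matrix irreducible and weight-balanced \emph{simultaneously}, so that the hypotheses of Theorem~\ref{theorem:dsable_wbs} are genuinely met. I do not anticipate a real obstacle, since the theorem has already done the substantive work: the corollary merely packages it by observing that self-loops supply exactly the degrees of freedom needed to flatten the weighted-degree profile to a constant.
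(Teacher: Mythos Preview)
Your proposal is correct and follows essentially the same approach as the paper: invoke weight-balanceability of a strongly connected digraph, then add self-loops with suitable weights so that all row sums agree, placing the resulting matrix in the scope of Theorem~\ref{theorem:dsable_wbs}. The paper's proof is a two-line sketch of exactly this argument; your version simply spells out the choice of~$C$ and verifies explicitly that self-loop addition preserves both irreducibility and weight-balance.
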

\begin{proof}
  Any strongly connected digraph is weight-balanceable. The result
  follows from noting that, for any weight-balanced matrix, it is
  enough to add self-loops with appropriate weights to the vertices of
  the digraph to make the conditions of
  Theorem~\ref{theorem:dsable_wbs} hold.
\end{proof}

Regular undirected graphs trivially satisfy the conditions of
Theorem~\ref{theorem:dsable_wbs} and hence the following result.
\begin{corollary}[Undirected regular graphs]
  All undirected regular graphs are doubly stochasticable.
\end{corollary}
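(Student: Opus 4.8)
The plan is to show that the trivial (all-ones) weight assignment on a regular graph already satisfies the hypotheses of Theorem~\ref{theorem:dsable_wbs}, so that its row-stochastic normalization $\phi(A)$ is doubly stochastic. Let $G$ be an undirected $k$-regular graph with $k\ge 1$ (so that edges exist; the $0$-regular case has no edges and is excluded by the standing interest in nonzero weights). View $G$ as a digraph in which every undirected edge contributes the two opposite directed edges, and assign weight $1$ to each edge. This yields an adjacency matrix $A$ whose entries satisfy $a_{ij}=a_{ji}$, since neighboring relations in an undirected graph are bidirectional, and $\sum_{l=1}^n a_{il}=k$ for every $i$, since $G$ is $k$-regular. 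Symmetry of $A$ gives $\wdin(v_i)=\wdout(v_i)=k$ for all $i$, so $G$ is weight-balanced, and the row sums are the common constant $C=k>0$.

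First I would dispose of the connected case. If $G$ is connected, then $A$ is irreducible, so $A\in\mathrm{Irr}(\mathbb{R}_{\geq 0}^{n\times n})$ is the adjacency matrix of a strongly connected weight-balanced digraph with constant row sum $C=k$. Theorem~\ref{theorem:dsable_wbs} then applies verbatim and yields that $\phi(A)$ is doubly stochastic, whence $G$ is doubly stochasticable.

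The main obstacle is that Theorem~\ref{theorem:dsable_wbs} requires irreducibility, which fails precisely when $G$ is disconnected; for instance, a disjoint union of triangles is $2$-regular but not connected. To handle this I would pass to strongly connected components. As a digraph, a regular graph is strongly semiconnected, since every directed edge is accompanied by its reverse. Moreover, each of its strongly connected components is itself a connected $k$-regular graph, because in an undirected graph every vertex keeps all $k$ of its neighbors inside its own component. Applying the connected case to each component shows that every strongly connected component is doubly stochasticable, and Lemma~\ref{le:decomp} then lifts this to the conclusion that $G$ is doubly stochasticable.

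As a sanity check, and to make the mechanism transparent, one can bypass the reduction entirely and verify directly that assigning weight $1/k$ to each edge produces a matrix all of whose row and column sums equal $1$, i.e., a doubly stochastic adjacency matrix; the route through Theorem~\ref{theorem:dsable_wbs} is simply the one advertised in the preceding remark.
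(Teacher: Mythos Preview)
Your proposal is correct and follows the same approach as the paper, which simply observes that regular undirected graphs ``trivially satisfy the conditions of Theorem~\ref{theorem:dsable_wbs}.'' In fact you are more careful than the paper: Theorem~\ref{theorem:dsable_wbs} is stated for irreducible matrices, and you handle the disconnected case explicitly via Lemma~\ref{le:decomp} (and via the direct $1/k$ check), whereas the paper leaves this implicit.
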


We capture the essence of Theorem~\ref{theorem:dsable_wbs}  with
the following definition. 
\begin{definition}[$ C $-regularity]
  Let $ G=(V,E) $ be a strongly connected digraph and let $ A $ be a
  weight-balanced adjacency matrix which satisfies the conditions of
  Theorem~\ref{theorem:dsable_wbs} with $ C\in \mathbb{R}_{>0} $. Then
  we refer to $ G=(V,E,A) $ as a \emph{$ C $-regular digraph}.
\end{definition}

\subsection{Necessary and sufficient conditions for doubly
  stochasticability}\label{sec:ds}

In this section, we provide a characterization of the structure of
digraphs that are doubly stochasticable. We start by giving a
sufficient condition for doubly stochasticability.

\begin{proposition}[Sufficient condition for doubly
  stochasticability]\label{prop:main_1}
  A strongly connected digraph $G$ is doubly stochasticable if there
  exists a set $ \{\subscr{G}{cyc}^i\}_{i=1}^{\xi} \subseteq
  \mathcal{C}(G) $, where $ \xi \geq p(G) $, that generates $G$ and
  such that, for each $ i\in \{1,\ldots, \xi\} $, $ \subscr{G}{cyc}^i$
  contains all the vertices of $G$.
\end{proposition}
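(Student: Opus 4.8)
The plan is to construct an explicit doubly stochastic weight assignment for $G$ as a convex combination of the permutation matrices naturally associated with the given cycle covers. The starting observation is that, because each $\subscr{G}{cyc}^i$ is a union of disjoint cycles that contains all the vertices of $G$, Lemma~\ref{lemma:perm_cycle} ensures that its extended adjacency matrix, which I will denote $P_i \in \mathbb{R}^{n\times n}$, is a permutation matrix. Since every permutation matrix has exactly one unit entry in each row and in each column, each $P_i$ is in particular doubly stochastic.

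Next I would take the average (or any convex combination with strictly positive coefficients)
\[
A = \frac{1}{\xi}\sum_{i=1}^{\xi} P_i ,
\]
and argue that $A$ is the desired assignment. First, $A$ is doubly stochastic: each of its row sums equals $\frac{1}{\xi}\sum_{i=1}^{\xi} 1 = 1$, and likewise for the column sums, since the same holds for every $P_i$. Second, $A$ is a valid adjacency matrix for $G$, which amounts to checking that the $(j,k)$ entry of $A$ is strictly positive exactly when $(v_j,v_k)\in E$. On the one hand, each $\subscr{G}{cyc}^i$ is a subdigraph of $G$, so $(P_i)_{jk}=0$ whenever $(v_j,v_k)\notin E$, whence $a_{jk}=0$ off the edge set. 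On the other hand, because the family $\{\subscr{G}{cyc}^i\}_{i=1}^{\xi}$ generates $G$, every edge $(v_j,v_k)\in E$ lies in at least one $\subscr{G}{cyc}^i$, so $(P_i)_{jk}=1$ for that index and, all coefficients being positive, $a_{jk}>0$. Hence $A$ has positive entries precisely on $E$ and is doubly stochastic, establishing that $G$ is doubly stochasticable.

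Unlike several of the surrounding results, I do not anticipate a genuine obstacle here: the argument is in essence the observation that the doubly stochastic matrices form a convex set whose extreme points are exactly the permutation matrices, together with the fact that the generating hypothesis is precisely what places the support of the averaged matrix on all of $E$. The only point deserving a line of care is confirming the support condition in both directions, namely that the convex combination neither vanishes on an edge (ruled out by positivity of the coefficients) nor becomes positive off the edge set (ruled out by each $P_i$ being the adjacency matrix of a subdigraph of $G$). The hypothesis $\xi\geq p(G)$ plays no active role in the argument beyond recording that any generating family must contain at least as many elements as a principal cycle set; the substantive assumption is that each cover spans all the vertices, which is exactly what turns each extended adjacency matrix into a doubly stochastic permutation matrix via Lemma~\ref{lemma:perm_cycle}.
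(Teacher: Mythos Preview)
Your proof is correct and follows essentially the same construction as the paper: form the (scaled) sum of the extended adjacency matrices of the given cycle covers and observe that the result is doubly stochastic with support exactly $E$. The paper phrases the last step via Theorem~\ref{theorem:dsable_wbs} (each row of the unnormalized sum equals $\xi$), whereas you invoke Lemma~\ref{lemma:perm_cycle} to identify each summand as a permutation matrix and verify the doubly stochastic and support conditions directly; the two routes are equivalent, and your explicit check of the support in both directions is a welcome bit of care that the paper leaves implicit.
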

\begin{proof} 
  Suppose $ G=\cup_{i=1}^{\xi}\subscr{G}{cyc}^i $,
  where $ \subscr{G}{cyc}^i\in \mathcal{C}(G) $ contain all the
  vertices, for all $ i\in \{1,\ldots, \xi\} $.  Consider the
  adjacency matrix
  \begin{equation*}
    A=\sum_{i=1}^{\xi}\subscr{A}{cyc}^i,
  \end{equation*}
  where $ \subscr{A}{cyc}^i $ is the extended adjacency matrix
  associated to $ \subscr{G}{cyc}^i $. Note that $A$ is
  weight-balanced and satisfies the condition of
  Theorem~\ref{theorem:dsable_wbs} (the sum of each row is equal to
  $\xi$). Thus $G$ is doubly stochasticable.
\end{proof}

Proposition~\ref{prop:main_1} suggests the definition of the following
notion. Given a strongly connected digraph $G$ that satisfies the
sufficient condition of Proposition~\ref{prop:main_1}, $DS(G)\subseteq
\mathcal{C}(G) $ is a \emph{DS-cycle set} of $G $ if all its elements
contain all the vertices of $G$, they generate $G$, and there is no
subset of $ \mathcal{C}(G) $ with strictly smaller cardinality that
satisfies these properties.  The cardinality of any DS-cycle set of $
G $ is the \emph{DS-character} of $ G $, denoted $ \ds(G)$.  By
definition, $ \ds(G)\geq p(G) $, where recall that $p(G)$ denotes the
cardinality of any principal cycle set.

The following result states that the condition of
Proposition~\ref{prop:main_1} is actually necessary for a digraph to be
doubly stochasticable.
  
\begin{proposition}[Necessary condition for doubly
  stochasticability]\label{prop:dsable}
  Let $G$ be a strongly connected digraph. Suppose that one can assign
  a doubly stochastic adjacency matrix $ A$ to $G$. Then $ G $
  satisfies the condition of Proposition~\ref{prop:main_1} and
  \[
  A = \sum_{i=1}^{\xi}\lambda_i\subscr{A}{cyc}^i,
  \]
  where 
  \begin{itemize}
  \item $\{ \lambda_i \}_{i=1}^{\xi} \subset \mathbb{R}_{> 0}$, $
    \sum^{\xi}_{i=1}\lambda_i=1 $, and $ \xi \geq \ds(G) $.
  \item $ \subscr{A}{cyc}^i $, $ i\in\{1,\ldots,\xi\} $, is the
    extended adjacency matrix associated to an element of $
    \mathcal{C}(G) $ that contains all the vertices.
  \end{itemize}
\end{proposition}
\begin{proof}
  Let $A $ be a doubly stochastic matrix associated to $ G $. By the
  Birkhoff\textendash von Neumann theorem~\cite{GB:1946}, a square
  matrix is doubly stochastic if and only if it is a convex
  combination of permutation matrices. Therefore,
  \[
  A =\sum_{i=1}^{n!}\bar{\lambda}_i\subscr{A}{perm}^i,
  \]
  where $ \bar{\lambda}_i\in \mathbb{R}_{\geq0} $, $
  \sum_{i=1}^{n!}\bar{\lambda}_i=1 $, and $ \subscr{A}{perm}^i $ is a
  permutation matrix for each $ i \in\{1,\ldots, n!\} $.  By
  Lemma~\ref{lemma:perm_cycle}, for all $ \bar{\lambda}_i> 0 $, one
  can associate to the corresponding $ \subscr{A}{perm}^i $ a union of
  disjoint cycles that contains all the vertices. Thus each
  $\subscr{A}{perm}^i $ is an extended adjacency matrix associated to
  an element of $ \mathcal{C}(G) $ that contains all the vertices.
  This proves that there exists a set $
  \{\subscr{G}{cyc}^i\}_{i=1}^{\xi} \subseteq \mathcal{C}(G) $, where
  $ \xi \geq p(G) $, that generates $G$; thus $ G $ satisfies the
  condition of Proposition~\ref{prop:main_1}.  Let us rename all the
  nonzero coefficients $ \bar{\lambda}_i>0 $ to $ \lambda_i $. In
  order to complete the proof, we need to show that at least $ \ds(G)$
  of the $ \lambda_i $'s are nonzero. Suppose otherwise. Since each $
  \subscr{A}{perm}^i $ with nonzero coefficient is associated to an
  element of $ \mathcal{C}(G) $, this means that the digraph $ G $ can
  be generated by fewer elements than $ \ds(G) $, which would contradict
 the definition of DS-character.
\end{proof}

Note that Propositions~\ref{prop:main_1} and~\ref{prop:dsable} fully
characterize the set of doubly stochasticable strongly connected
digraphs. We gather this result in the following corollary.
\begin{corollary}[Necessary and sufficient condition for doubly
  stochasticability]\label{corollary:main}
  A strongly connected digraph $G$ is doubly stochasticable if and
  only if there exists a set $ \{\subscr{G}{cyc}^i\}_{i=1}^{\xi}
  \subseteq \mathcal{C}(G) $, where $ \xi \geq \ds(G) $, that generates
  $G$ and such that $ \subscr{G}{cyc}^i $ contains all the vertices of
  $ G $, for each $ i\in \{1,\ldots, \xi\} $.
\end{corollary}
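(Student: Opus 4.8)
The plan is to obtain the corollary directly from Propositions~\ref{prop:main_1} and~\ref{prop:dsable}, which respectively supply the sufficiency and the necessity of the stated structural condition. Since the statement is a biconditional, I would prove the two implications separately, and in each case the work has essentially already been done by the corresponding proposition; what remains is to align their hypotheses.

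For the necessity direction (doubly stochasticable $\Rightarrow$ condition), I would invoke Proposition~\ref{prop:dsable}. If $G$ admits a doubly stochastic adjacency matrix $A$, that proposition guarantees that $G$ satisfies the condition of Proposition~\ref{prop:main_1} and, moreover, that $A$ decomposes as $A=\sum_{i=1}^{\xi}\lambda_i\subscr{A}{cyc}^i$ with $\xi \geq \ds(G)$, where each $\subscr{A}{cyc}^i$ is the extended adjacency matrix of an element of $\mathcal{C}(G)$ containing all the vertices of $G$. The associated subdigraphs $\{\subscr{G}{cyc}^i\}_{i=1}^{\xi}$ then generate $G$, each contains every vertex, and their number is at least $\ds(G)$, which is exactly the asserted condition.

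For the sufficiency direction (condition $\Rightarrow$ doubly stochasticable), I would begin with a set $\{\subscr{G}{cyc}^i\}_{i=1}^{\xi}\subseteq\mathcal{C}(G)$ with $\xi\geq\ds(G)$ that generates $G$ and all of whose elements contain every vertex. The only point needing care is that Proposition~\ref{prop:main_1} is stated with the weaker threshold $\xi\geq p(G)$ rather than $\xi\geq\ds(G)$; these are reconciled by the inequality $\ds(G)\geq p(G)$ recorded immediately after the definition of the DS-character. Thus $\xi\geq\ds(G)\geq p(G)$, the hypotheses of Proposition~\ref{prop:main_1} are met, and $G$ is doubly stochasticable. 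I do not anticipate any genuine obstacle here, since both propositions are already in hand; the sole subtlety is this bookkeeping between the two cardinality thresholds $p(G)$ and $\ds(G)$, which the inequality $\ds(G)\geq p(G)$ settles, so I would keep the proof to a few lines that cite the two propositions and note the threshold comparison.
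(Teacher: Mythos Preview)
Your proposal is correct and matches the paper's approach: the corollary is stated immediately after Propositions~\ref{prop:main_1} and~\ref{prop:dsable} as a direct combination of the two, with no separate proof given. Your added remark about reconciling the thresholds via $\ds(G)\geq p(G)$ is a harmless clarification, since any generating family in $\mathcal{C}(G)$ whose members all contain every vertex automatically has cardinality at least $\ds(G)\geq p(G)$ by the very definitions of these quantities.
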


If a doubly stochasticable digraph is not strongly connected, one can
use these results for Corollary~\ref{corollary:main} on each strongly
connected component.

\begin{example}[Weight-balanceable, not doubly stochasticable
    digraph]\label{ex:1} {\rm Consider the digraph $ G $ shown in
    Figure~\ref{fig:ex1}(a). It is shown in~\cite{BG-JC:09-allerton}
    that there exists a set of weights which makes this digraph
    weight-balanced.  We show that the digraph is not doubly
    stochasticable.
    \begin{figure}
      \[(a) \;
      \xymatrix{ v_2 \ar[dr] \ar[ddr] & & v_1 \ar[ll] \\
        & v_3 \ar[ur] &\\
        v_5 \ar[ur] & v_4 \ar[u] \ar[uur] \ar[l] & }\quad (b) \;
      \xymatrix{ v_1 \ar[rr] & & v_2 \ar[d] \\
      v_5 \ar[u] \ar@/^/[dr] & &v_3 \ar[ll] \ar[dl]\\
      & v_4 \ar[ul] \ar[uul]& }
    \]
    \caption{The digraph of Examples~\ref{ex:1} and~\ref{ex:2} are
      shown in plots (a) and (b), respectively.}
      \label{fig:ex1}
    \end{figure}
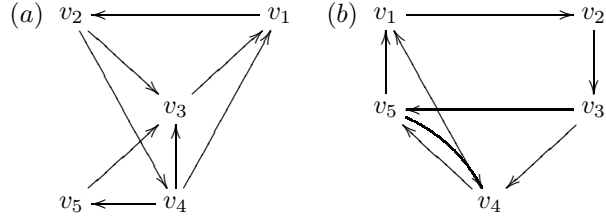
    Suppose there exists a union of disjoint cycles containing all the
    vertices.  Since the edge $ (v_2,v_3) $ only appears in the cycle
    $ \subscr{G}{cyc}=\{v_1,v_2,v_3\} $, one element in such a union
    must contain this cycle. But then it is impossible for this
    element to also contain $ v_4 $ and $ v_5 $, as every cycle
    containing $ v_4 $ and $ v_5 $ also contains at least one of the
    vertices $ \{v_1,v_2,v_3\} $.  Thus by
    Corollary~\ref{corollary:main}, there exists no doubly stochastic
    adjacency assignment for this digraph. One can verify this by
    trying to find such assignment explicitly, i.e., by seeking $
    \alpha_i \in \mathbb{R}_{>0} $, where $ i\in\{1,\ldots,8\} $, such
    that
    \[
    A=\left(\begin{array}{ccccc}0 & \alpha_1 & 0 &0 & 0\\
        0 & 0 & \alpha_2 & \alpha_3 & 0\\
        \alpha_4 & 0 &0 &0 &0\\
        \alpha_5 &0 & \alpha_6 & 0 & \alpha_7\\
        0& 0 &\alpha_8 & 0 & 0
      \end{array}\right)
    \]
    is doubly stochastic. A simple computation shows that such an
    assignment is not possible unless $ \alpha_2=\alpha_5=\alpha_6=0$,
    which is a contradiction.  \oprocend}
\end{example}

\begin{example}[Doubly stochasticable digraph]\label{ex:2}
  {\rm Consider the digraph $ G $ shown in Figure~\ref{fig:ex1}(b).
    One can observe that the only principal cycle set of $ G $
    contains the two cycles shown in Figure~\ref{fig:ex2_pc}.
  \begin{figure}
    \[
    \xymatrix{ v_1 \ar[rr] & & v_2 \ar[d] \\
      v_5 \ar[u]  & &v_3  \ar[dl]\\
      & v_4 \ar[ul] & } \qquad
      \xymatrix{ v_1 \ar[rr] & & v_2 \ar[d] \\
      v_5 \ar@/^/[dr] & &v_3 \ar[ll] \\
      & v_4 \ar[uul]& }
    \]
    \caption{The only principal cycle set for the digraph of
      Example~\ref{ex:2} contains the above cycles.}
    \label{fig:ex2_pc}
  \end{figure}
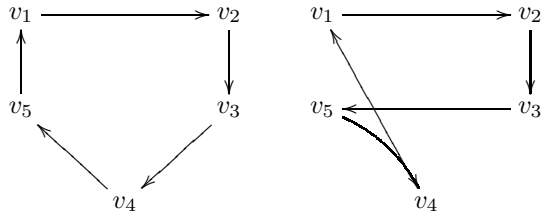
  Both of these cycles pass through all the vertices of the digraph
  and thus, using Corollary~\ref{corollary:main}, this digraph is
  doubly stochasticable. Note that this digraph has another three
  cycles, shown in Figure~\ref{fig:ex2_extra_cycles}, none of which is
  in the principal cycle set.
  \begin{figure}
    \[
    \xymatrix{ v_1 \ar[rr] & & v_2 \ar[d] \\
      & &v_3 \ar[dl]\\
      & v_4 \ar[uul]& }\qquad
      \xymatrix{
        \\
      v_5 \ar@/^/[dr] & &\\
      & v_4 \ar[ul] & }
    \quad
    \xymatrix{ v_1 \ar[rr] & & v_2 \ar[d] \\
      v_5 \ar[u] & &v_3 \ar[ll] }
    \]
    \caption{Cycles of the digraph $ G $ of Example~\ref{ex:2} which
      are not in the principal cycle set.}
    \label{fig:ex2_extra_cycles}
  \end{figure}
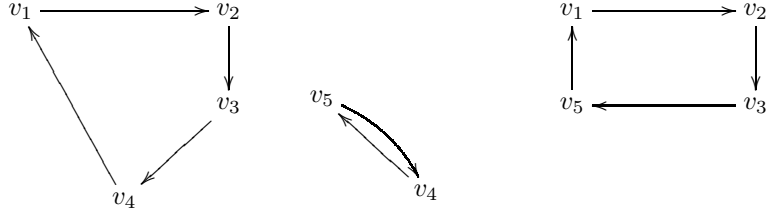
  The adjacency matrix assignment
  \[
  A=\left(\begin{array}{ccccc}
      0 & 2 & 0 &0 & 0\\
      0 & 0 & 2 & 0 & 0\\
      0 & 0 & 0 & 1 & 1\\
      1 & 0 & 0 & 0 & 1\\
      1 & 0 & 0 & 1 &0
    \end{array}\right)
  \]
  obtained by the sum of the elements of the principal cycle set, is
  weight-balanced and satisfies the conditions of
  Theorem~\ref{theorem:dsable_wbs} and thus is doubly stochasticable.
  Also note that not all the weight-balanced adjacency assignments
  become doubly stochastic under the row-stochastic normalization
  map. An example is given by
  \[
  A=\left(\begin{array}{ccccc}
      0 & 3 & 0 &0 & 0\\
      0 & 0 & 3 & 0 & 0\\
      0 & 0 & 0 & 2 & 1\\
      2 & 0 & 0 & 0 & 2\\
      1 & 0 & 0 & 2 &0
    \end{array}\right) .   \eqoprocend
  \]
}
\end{example}
\smallskip


An alternative question to the one considered above would be to find a
set of edge weights (some possibly zero) that make the digraph doubly
stochastic. Such assignments exist for the digraph in
Figure~\ref{fig:ds_digraph_ex1}.  However, such weight assignments are
not guaranteed, in general, to preserve the connectivity of the
digraph.  The following result gives a sufficient condition for the
existence of such a weight assignment.

\begin{proposition}[Doubly stochasticable digraphs via weight
  assignments with some zero entries]\label{prop:zeros-in-weights}
  A strongly connected digraph $ G$ admits an edge weight assignment
  (where some entries might be zero) such that the resulting weighted
  digraph is strongly connected and doubly stochastic if there exists
  a cycle containing all the vertices of~$G$.
\end{proposition}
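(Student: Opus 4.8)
The plan is to exploit the hypothesized cycle directly: I would place all the weight on its edges and set every other edge weight to zero. Concretely, let $C$ be a cycle of $G$ that contains all $n$ vertices, and define the adjacency matrix $A$ by $a_{ij}=1$ whenever $(v_i,v_j)$ is an edge of $C$ and $a_{ij}=0$ otherwise. Intuitively, since $C$ visits each vertex exactly once, every vertex inherits exactly one in-edge and one out-edge of positive weight, which is precisely the local balance that double stochasticity demands.

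The first step is to observe that $A$ is exactly the extended adjacency matrix associated to $C$. Since $C$ contains all the vertices of $G$, Lemma~\ref{lemma:perm_cycle} tells us that $A$ is a permutation matrix. A permutation matrix has a single unit entry in each row and each column, so all its row and column sums equal $1$; hence $A$ is doubly stochastic. This settles the doubly stochastic requirement without any further computation.

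The second step is to check strong connectivity of the resulting weighted digraph. Its edge set consists precisely of the edges carrying positive weight, namely the edges of $C$. A cycle through all vertices is strongly connected, since any vertex reaches any other by following $C$ in its forward direction. Therefore the weighted digraph is strongly connected, which completes the argument.

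I do not anticipate a genuine obstacle here: once the spanning cycle is in hand, the permutation-matrix structure furnishes double stochasticity for free via Lemma~\ref{lemma:perm_cycle}, and the spanning nature of the cycle furnishes strong connectivity. The only subtlety worth flagging is that we deliberately set the weights of all edges of $G$ outside $C$ to zero; the claim that connectivity survives this deletion holds exactly because the surviving edges already form a spanning cycle, so no appeal to the discarded edges is needed. This also clarifies the role of the hypothesis: a mere union of disjoint spanning cycles would still yield double stochasticity, but a single cycle is what guarantees that the positive-weight support by itself remains strongly connected.
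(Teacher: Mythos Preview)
Your argument is correct. The paper actually states Proposition~\ref{prop:zeros-in-weights} without proof, so there is no approach to compare against; your construction---assigning unit weight to the edges of the spanning cycle, zero elsewhere, and invoking Lemma~\ref{lemma:perm_cycle} to conclude the resulting matrix is a permutation (hence doubly stochastic) while the support remains strongly connected---is exactly the natural argument the authors evidently regarded as immediate.
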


It is an interesting research question to determine sufficient and
necessary conditions that characterize digraphs that are doubly
stochasticable via weight assignments that can have some zero entries
and still preserve the graph connectivity.  Regarding
Proposition~\ref{prop:zeros-in-weights}, note that even if
connectivity is preserved, fewer edges lead to smaller algebraic
connectivity~\cite{CWW:05}, which in turn affects negatively the rate
of convergence of the consensus, optimization, and gossip algorithms
executed over doubly stochastic digraphs, see
e.g.,~\cite{SB-AG-BP-DS:06,JL-ASM-BDOA-CY:09,MC-CWW:07,AG-SB:06}.

\subsection{Properties of the topological character of doubly
  stochasticable digraphs}\label{section:top_char}
In this section, we investigate the properties of DS-cycle sets and of
their cardinality $ \ds(G) $. First, we show that rational weight
assignments are sufficient to make an adjacency matrix doubly
stochastic.



\begin{lemma}[Rational weight assignments]\label{le:real_rational_ds}
  Assume there exists a real weight assignment that makes a digraph
  $G$ doubly stochastic. Then, there also exists a rational weight
  assignment that makes $ G $ doubly stochastic.
\end{lemma}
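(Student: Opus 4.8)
The plan is to exploit that \emph{being doubly stochastic with the prescribed support} is a system of \emph{linear} constraints with integer coefficients, so the admissible assignments form (the positive part of) a rational affine subspace; a density argument then extracts a rational point from a region we already know to be nonempty. The key realization is that we are not asked to round a single real matrix, but only to find \emph{some} rational doubly stochastic assignment, and the feasible set carries rational structure inherited from the integer constraints.

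First I would introduce one real variable $x_e$ for each edge $e=(v_i,v_j)\in E$, set $a_{ij}=x_e$, and fix $a_{ij}=0$ for $(v_i,v_j)\notin E$. Under this identification, the conditions that every row and every column of $A$ sum to $1$ become a linear system $Lx=\mathbf{1}$, where $L\in\{0,1\}^{2n\times|E|}$ is the integer matrix whose column indexed by $e=(v_i,v_j)$ has a $1$ in the $i$-th row-sum constraint and a $1$ in the $j$-th column-sum constraint, and $\mathbf{1}$ is the all-ones vector. The requirement that $A$ have exactly the sparsity pattern of $E$ is the collection of \emph{open} conditions $x_e>0$ for all $e\in E$. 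By hypothesis there is a real solution $x^\star$ of $Lx=\mathbf{1}$ with $x^\star_e>0$ for every $e\in E$.

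Next I would record the rational structure of the solution set $S=\{x\in\mathbb{R}^{|E|} : Lx=\mathbf{1}\}$. Since $L$ and $\mathbf{1}$ have rational entries and $S\neq\emptyset$ (it contains $x^\star$), the ranks of $L$ and of the augmented matrix coincide whether computed over $\mathbb{Q}$ or over $\mathbb{R}$; hence Gaussian elimination, which never leaves $\mathbb{Q}$, yields a rational particular solution $x_0$, and $\ker L$ admits a basis of rational vectors. Thus $S=x_0+\ker L$ is an affine subspace in which the rational points $S\cap\mathbb{Q}^{|E|}$ are dense for the Euclidean topology. The conclusion then follows from an openness argument: the positivity region $\{x : x_e>0 \text{ for all } e\in E\}$ is open in $\mathbb{R}^{|E|}$, so its intersection $U$ with $S$ is relatively open in $S$ and nonempty, as $x^\star\in U$. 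By density, $U$ contains a rational point $\hat{x}$; assigning $a_{ij}=\hat{x}_e>0$ for $e=(v_i,v_j)\in E$ produces a rational adjacency matrix with exactly the support of $E$ whose rows and columns all sum to $1$, i.e., a rational doubly stochastic assignment for $G$.

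I expect the only genuinely delicate point to be the transfer from $\mathbb{R}$ to $\mathbb{Q}$, namely the claim that consistency of $Lx=\mathbf{1}$ over $\mathbb{R}$ forces a rational particular solution together with a rational description of $S$. This is standard linear algebra over the subfield $\mathbb{Q}\subseteq\mathbb{R}$ (equality of ranks and the field-independence of Gaussian elimination), but it is precisely the step that upgrades the existence statement from the reals to the rationals; everything else is the routine observation that the defining constraints are linear with integer data and that strict positivity is an open condition.
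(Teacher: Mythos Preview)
Your argument is correct but takes a genuinely different route from the paper. The paper leverages the structural results established just before the lemma: since $G$ is doubly stochasticable, Corollary~\ref{corollary:main} guarantees a DS-cycle set; summing the (integer, $0/1$) extended adjacency matrices of its members yields an integer weight-balanced assignment that is $C$-regular for some $C\in\mathbb{Z}_{>0}$, and dividing by $C$ produces a rational doubly stochastic assignment. In other words, the paper's proof is constructive and rides on the Birkhoff--von Neumann decomposition already invoked in Proposition~\ref{prop:dsable}. Your approach instead treats the problem as pure linear algebra over $\mathbb{Q}\subset\mathbb{R}$: the doubly stochastic constraints form a rational affine subspace, rational points are dense in it, and strict positivity is open. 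This is more elementary in that it requires none of the combinatorial machinery (cycles, permutation matrices, DS-character), and it would apply verbatim to any rational polyhedron with a strictly interior real point. The trade-off is that your argument is nonconstructive and does not integrate with the paper's subsequent use of integer $C$-regular assignments, whereas the paper's proof simultaneously justifies the restriction to integer weight-balanced matrices that drives Section~\ref{section:top_char}.
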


\begin{proof}
  By assumption, $ G $ is doubly stochasticable. Thus, by
  Corollary~\ref{corollary:main}, it has a DS-cycle set. Using
  elements of this DS-cycle set, one can construct an integer weight
  assignment $ \subscr{A}{wb} $ that makes $ G $ weight balanced and
  satisfies the conditions of Theorem~\ref{theorem:dsable_wbs} with
  some $ C\in \mathbb{Z}_{>0} $.  Therefore, $
  \frac{1}{C}\subscr{A}{wb} $ gives rise to a rational weight
  assignment that makes $ G $ doubly stochastic.
\end{proof}

Lemma~\ref{le:real_rational_ds} allows us to focus the attention,
without loss of generality, on doubly stochastic adjacency matrices
with rational entries or, alternatively, on weight-balanced adjacency
matrices with integer entries whose rows and columns all sum up to the
same integer. This is what we do in the rest of the paper.

\begin{proposition}[Properties of the DS-character]\label{prop:C_char}
  Let $ G $ be a strongly connected doubly stochasticable digraph of
  order $ n\in \mathbb{Z}_{>0} $ with DS-character $ \ds(G) $. Then
  for $ C\geq \ds(G) $, there exists a weight assignment $
  \subscr{A}{wb} \in \mathbb{Z}_{\geq 0}^{n\times n} $ that makes $
  G $ a $ C $-regular digraph.
\end{proposition}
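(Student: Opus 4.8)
The plan is to build the required assignment additively from a DS-cycle set, exploiting the fact that the extended adjacency matrix of a vertex-covering union of disjoint cycles is a permutation matrix (Lemma~\ref{lemma:perm_cycle}) and hence contributes exactly $1$ to every row and every column sum. First I would fix a DS-cycle set $\{\subscr{G}{cyc}^i\}_{i=1}^{\ds(G)}$, which exists by Corollary~\ref{corollary:main} since $G$ is strongly connected and doubly stochasticable, and let $\subscr{A}{cyc}^i$ denote the corresponding extended adjacency matrices. Because each $\subscr{G}{cyc}^i$ contains all the vertices of $G$, Lemma~\ref{lemma:perm_cycle} guarantees that each $\subscr{A}{cyc}^i$ is a permutation matrix, so all of its row and column sums equal $1$. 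Exactly as in the proof of Proposition~\ref{prop:main_1}, the matrix $A_0 = \sum_{i=1}^{\ds(G)}\subscr{A}{cyc}^i$ is then an integer, weight-balanced matrix all of whose row and column sums equal $\ds(G)$. Moreover, since a DS-cycle set generates $G$, every edge of $E$ appears in at least one $\subscr{G}{cyc}^i$, so the support of $A_0$ is exactly $E$; thus $A_0$ is a genuine adjacency matrix of $G$ and already settles the base case $C=\ds(G)$.

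For an integer $C > \ds(G)$, I would simply add extra copies of a single element of the DS-cycle set. Concretely, I would set
\[
\subscr{A}{wb} = \sum_{i=1}^{\ds(G)}\subscr{A}{cyc}^i + (C-\ds(G))\,\subscr{A}{cyc}^1 .
\]
This is again a sum of weight-balanced integer matrices, hence integer and weight-balanced; and since each summand is a permutation matrix with unit row and column sums, every row and every column sum of $\subscr{A}{wb}$ equals $\ds(G)+(C-\ds(G))=C$. Having all row sums equal to the common value $C$ while being weight-balanced is precisely the condition of Theorem~\ref{theorem:dsable_wbs}, so $G=(V,E,\subscr{A}{wb})$ is a $C$-regular digraph, as required.

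The only point requiring care — and the step I would flag as the main (if modest) obstacle — is verifying that the construction keeps the support equal to $E$, i.e., that no edge weight is inadvertently driven to zero and that no spurious edge is created. This is immediate: $(C-\ds(G))\subscr{A}{cyc}^1 \geq 0$ adds weight only on edges already present in $A_0$, whose support is exactly $E$ by the generating property of the DS-cycle set, so $\subscr{A}{wb}$ has the same support as $A_0$. Consequently there is no real obstacle. I note that one could equally add copies of any vertex-covering union of disjoint cycles of $G$, but using an element of the DS-cycle set is convenient since it is already available and its edges lie in $E$.
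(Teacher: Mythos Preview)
Your proof is correct and follows essentially the same approach as the paper: both construct $\subscr{A}{wb}$ as a positive-integer combination $\sum_{i=1}^{\ds(G)}\lambda_i\subscr{A}{cyc}^i$ of the extended adjacency matrices of a DS-cycle set with $\sum_i\lambda_i=C$, the paper allowing arbitrary such $\lambda_i\in\mathbb{Z}_{>0}$ while you make the specific choice $\lambda_1=C-\ds(G)+1$, $\lambda_i=1$ for $i\ge2$. Your explicit verification that the support of $\subscr{A}{wb}$ equals $E$ (so that $\subscr{A}{wb}$ is a bona fide adjacency matrix of $G$) is a detail the paper leaves implicit.
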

\begin{proof}
  By Corollary~\ref{corollary:main}, it is clear that one can generate
  $ \subscr{A}{wb} \in \mathbb{Z}_{\geq 0}^{n\times n} $ that makes $
  G $ weight-balanced and also satisfies the conditions of
  Theorem~\ref{theorem:dsable_wbs} for $ C=\ds(G) $, just by taking
  the weighted union of the members of a DS-cycle set.  Let $ C>\ds(G)
  $.  Choose a set of integer numbers $ \lambda_i \in \mathbb{Z}_{> 0}
  $, for $ i \in \{1,\ldots, \ds(G)\} $, such that $
  \sum_{i=1}^{\ds(G)}\lambda_i=C $.  Consider the adjacency matrix
  \[
  A=\sum_{i=1}^{\ds(G)}\lambda_i\subscr{A}{cyc}^i,
  \]
  where $ \subscr{A}{cyc}^i $ is the extended adjacency matrix
  associated to the $ i $th element of the DS-cycle set. The
  matrix $A$ is weight-balanced adjacency and satisfies the conditions
  of Theorem~\ref{theorem:dsable_wbs}.  
\end{proof}


We finish this section by bounding the
DS-character of a digraph.

\begin{lemma}[Bounds for the
  DS-character]\label{lemma:DS_character_bound}
  Let $ G=(V,E) $ be a doubly stochasticable strongly connected
  digraph. Then
  \[
  \max \{\max_{v \in V} \dout(v), \max_{v \in V}
\din(v)\} \leq \ds(G)\leq |E|-|V|+1.
  \]
\end{lemma}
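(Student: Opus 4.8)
The plan is to prove the two inequalities separately, both exploiting the fact that every element $\subscr{G}{cyc}^i$ of a DS-cycle set is a spanning union of disjoint cycles and hence, by Lemma~\ref{lemma:perm_cycle}, corresponds to a permutation matrix $\subscr{A}{cyc}^i$. In such a matrix each row and each column contains exactly one nonzero entry, so within each $\subscr{G}{cyc}^i$ every vertex $v$ has exactly one out-edge and exactly one in-edge. This single observation drives both bounds.

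For the lower bound, I would fix a DS-cycle set $DS(G)=\{\subscr{G}{cyc}^i\}_{i=1}^{\ds(G)}$ and a vertex $v$. Since $DS(G)$ generates $G$, each of the $\dout(v)$ out-edges of $v$ in $G$ must appear in at least one $\subscr{G}{cyc}^i$. But each $\subscr{G}{cyc}^i$ contributes exactly one out-edge of $v$, so at least $\dout(v)$ distinct elements are needed to cover all of them; hence $\ds(G)\ge \dout(v)$. The identical argument applied to in-edges gives $\ds(G)\ge \din(v)$, and taking the maximum over $v\in V$ yields the left-hand inequality.

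For the upper bound I would first record the key fact that every edge of $G$ lies in some spanning union of disjoint cycles. This follows from Proposition~\ref{prop:dsable}: the doubly stochastic matrix $A$ decomposes as $A=\sum_i\lambda_i\subscr{A}{cyc}^i$ with all $\lambda_i>0$ and all $\subscr{A}{cyc}^i$ spanning, so a positive entry $a_{ab}$ forces the edge $(v_a,v_b)$ into at least one $\subscr{G}{cyc}^i$. With this in hand, I would build a generating set greedily: pick any one spanning cycle-union, which already covers exactly $|V|$ edges, and then repeatedly, while some edge $e$ remains uncovered, adjoin a spanning cycle-union containing $e$. Each step after the first covers at least one previously uncovered edge, and only $|E|-|V|$ edges remain after the first step, so the process terminates after at most $|E|-|V|$ additional steps. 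The resulting collection consists of spanning cycle-unions whose union is $G$, i.e. a generating set of size at most $|E|-|V|+1$; since $\ds(G)$ is the minimum size of such a set (Corollary~\ref{corollary:main}), we obtain $\ds(G)\le |E|-|V|+1$.

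The lower bound is essentially a counting argument and presents no real difficulty. The main work lies in the upper bound, specifically in justifying that every edge belongs to some spanning cycle-union and in verifying that the greedy covering process makes genuine progress—covering a new edge—at every step. Once these two points are secured, the edge-counting bookkeeping that produces the bound $|E|-|V|+1$ is routine.
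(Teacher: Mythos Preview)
Your proposal is correct and follows essentially the same approach as the paper: for the lower bound, both you and the paper use that each element of a DS-cycle set contributes exactly one out-edge (resp.\ in-edge) at any vertex, so $\ds(G)$ must be at least the maximum degree; for the upper bound, both arguments start from one spanning cycle-union covering $|V|$ edges and then cover the remaining $|E|-|V|$ edges one at a time. The only difference is that you explicitly justify, via Proposition~\ref{prop:dsable}, that every edge of $G$ lies in some spanning cycle-union---a point the paper's proof leaves implicit when it asserts that the remaining edges ``can be represented by at most $|E|-|V|$ elements of $\mathcal{C}(G)$''---so your write-up is in fact slightly more complete.
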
\label{lemma:Cstar_bound}
\begin{proof}
  The first inequality follows from the fact that none of the
  out-edges of the vertex $ v $ (similarly, none of of in-edges of
  this vertex)
  are contained in the same element of any DS-cycle set $ DS(G)$. To
  show the second inequality, take any element of $ DS(G)$. This
  element must contain $ |V|$ edges.  The rest of the edges of the
  digraph can be represented by at most $ |E|-|V| $ elements of $
  \mathcal{C}(G)$, and hence the bound follows.  \quad
\end{proof}


\section{Strategies for making a digraph
  weight-balanced}\label{section:wb_algos}

The existing centralized algorithm for constructing a weight-balanced
digraph, proposed in~\cite{LHT:70}, relies on computing all the cycles
of the digraph and thus it is computationally complex.  In this
section, we instead introduce two distributed strategies that are
guaranteed to find a weight-balanced adjacency matrix for a
weight-balanceable digraph and compare their convergence properties.
Given the characterization of Theorem~\ref{Theorem:Theorem_2_Loh}, we
focus on strongly semiconnected digraphs. Since each strongly
connected component of the digraph is completely independent from the
others and can be balanced separately, without loss of generality we
deal with strongly connected digraphs throughout the section.

\subsection{The \wbda}\label{subsection:wb_dis_algo}

Given a strongly connected digraph $ G $, we introduce an algorithm,
distributed over $G$, which allows the agents to balance their in- and
out-degrees. We start by an informal description of the \wbda:
\begin{enumerate}
\item each agent can send messages to her out-neighbors and receive
  messages from her in-neighbors. There is an initial round when each
  agent assigns a weight to each out-edge and sends it to her
  corresponding out-neighbor. In this way, everybody can compute her
  in-degree. After this, at most, only one out-edge per agent is
  changed in each round;

\item for each agent, if the in-degree is more than the out-degree,
  the agent changes the weight on one of the out-edges with the
  minimum weight such that she is balanced and sends a message to the
  corresponding out-neighbor informing her of the change;

\item after receiving the messages, each agent updates the in- and
  out- degrees in the next round and repeats the above process.
\end{enumerate}

Note that this algorithm updates the weights synchronously. In the
following, we give a formal description. Suppose that a communication
network is given by a strongly connected digraph $ G=(V,E)$.  Let $
\Adj(G)\subset \mathbb{R}_{\geq 0}^{n\times n} $ be the subset of all
possible adjacency matrices with nonnegative entries associated to
$G$.  For $ A \in \Adj(G)$ and $ i\in\{1,\ldots,n\} $, let
\begin{align*}
  a_i^* &= \min_{k \in \{1,\dots,n\} \setminus \{i\}}\{a_{ik}\ | \
  a_{ik}\neq 0 \},
  \\
  J_i^* &= \{j\in\{1,\ldots,n\}\setminus \{i\} \ | \ a_{ij}=a_i^*\}.
\end{align*}
The set-valued evolution map $ \subscr{f}{imcor}: \Adj(G )
\rightrightarrows \Adj(G) $ assigns to $A = (a_{ij}) \in \Adj(G)$ the
set
\begin{multline}\label{eq:f}
  \subscr{f}{imcor}(A) = \left\{ B\in \Adj(G) \ | \ \text{for each} \;
    i \in \{1,\ldots, n\}, \text{there exists } j_i^*\in J_i^* \;
    \mathrm{with} \; \right.
  \\
  \left.  b_{ij}=
    \left\{
      \begin{array}{ll}
      a_{ij}+\omega(v_i)& \text{if }  \omega(v_i)>0 \;
      \mathrm{and} \; j=j_i^*
      \\
      a_{ij}, & \mathrm{otherwise}
      \end{array}
    \right. \right\} .
\end{multline}

Note that the discrete-time dynamical system on $\Adj(G)$ defined by
the map $ \subscr{f}{imcor}$ corresponds to the \wbda presented
above. Our strategy to establish the correctness of the algorithm is
then based on characterizing the properties of the set-valued map $
\subscr{f}{imcor}$ and then applying the LaSalle invariance principle,
cf. Theorem~\ref{theorem:LaSalle}. We first establish that the map is
closed.

\begin{lemma}[Closedness of $ \subscr{f}{imcor}
  $]\label{lemma:f_closed}
  The map $ \subscr{f}{imcor} $ is closed on $ \Adj(G) $.
\end{lemma}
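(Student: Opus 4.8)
The plan is to prove that $\subscr{f}{imcor}$ is closed on $\Adj(G)$ by directly verifying the definition of closedness: given convergent sequences $\{A_k\} \to A$ and $\{B_k\} \to B$ with $B_k \in \subscr{f}{imcor}(A_k)$ for all $k$, I must show $B \in \subscr{f}{imcor}(A)$. First I would note that since the zero/nonzero pattern of matrices in $\Adj(G)$ is fixed by the edge set $E$, all matrices in $\Adj(G)$ share the same support, and this set is closed under limits (a limit of matrices with a given support pattern still respects $a_{ij}=0$ whenever $(v_i,v_j)\notin E$; one must only check the strictly-positive entries stay positive, which is where the argument needs care). The core difficulty is that the map involves the discrete selection data $j_i^*\in J_i^*$, and the sets $J_i^*$ — defined via the $\arg\min$ of the nonzero entries in each row — can change discontinuously as $A_k\to A$. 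So the heart of the proof is a semicontinuity argument showing that the selection surviving in the limit is admissible for $A$.

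The key steps, in order, are as follows. For each $k$ and each row $i$, the membership $B_k\in\subscr{f}{imcor}(A_k)$ provides an index $j_i^*(k)\in J_i^*(A_k)$ realizing the update. Since each $j_i^*(k)$ lives in the finite set $\{1,\dots,n\}\setminus\{i\}$, by passing to a subsequence (finitely many rows, finitely many index choices, so a single subsequence works for all rows simultaneously) I may assume $j_i^*(k)=j_i^*$ is constant in $k$ for every $i$. The plan is then to show two things: (a) the limiting index $j_i^*$ belongs to $J_i^*(A)$, i.e.\ $a_{i j_i^*}=a_i^*(A)$; and (b) the entrywise update formula passes to the limit. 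For (b), the imbalance $\omega(v_i)$ is a continuous (linear) function of the matrix entries, so $\omega_k(v_i)\to\omega(v_i)$, and the sign condition $\omega_k(v_i)>0$ combined with $b_{ij}=a_{ij}+\omega(v_i)$ or $b_{ij}=a_{ij}$ passes to the limit except on the boundary case $\omega(v_i)=0$, which I would handle by observing that when $\omega(v_i)=0$ the formula $a_{ij}+\omega(v_i)=a_{ij}$ agrees with the ``otherwise'' branch, so both possible limiting behaviors coincide with an admissible value in $\subscr{f}{imcor}(A)$.

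The main obstacle is step (a), establishing that $j_i^*\in J_i^*(A)$, because $a_i^*$ is a minimum over only the \emph{nonzero} entries of row $i$, and this $\arg\min$ is not continuous in general. The plan is to argue by lower semicontinuity of the minimum: since $a_{i j_i^*(k)}=a_i^*(A_k)\le a_{ik}$ for every $k$ and every nonzero entry $a_{ik}(A_k)$, taking limits gives $a_{i j_i^*}\le a_{ik}$ for all $k$ with $a_{ik}>0$ in the limit; I must also confirm $a_{i j_i^*}>0$, which holds because $A$ and $A_k$ share the fixed support of $G$, so $(v_i,v_{j_i^*})\in E$ forces $a_{i j_i^*}>0$. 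This shows $a_{i j_i^*}$ equals the minimum nonzero entry $a_i^*(A)$, i.e.\ $j_i^*\in J_i^*(A)$. The delicate point I would flag is ensuring the support is genuinely preserved in the limit: $\Adj(G)$ consists of matrices whose nonzero pattern is \emph{exactly} $E$, so I should confirm that the evolution keeps all designated entries strictly positive (the update only increases a weight, and the hypothesis $B_k\in\Adj(G)$ already encodes that $B_k$ has the correct support), after which the limit $B$ inherits nonnegativity and the correct zero pattern, and the strictly positive entries remain positive by the minimum-entry comparison above. Assembling (a) and (b) for every row yields $B\in\subscr{f}{imcor}(A)$, completing the proof.
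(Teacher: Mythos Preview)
Your proposal is correct and follows essentially the same approach as the paper: extract a subsequence along which the selected indices $j_i^*$ are constant (using finiteness of the index set), then pass to the limit in the entrywise update formula using continuity of the imbalance. You are in fact more explicit than the paper in two places---verifying that the limiting index $j_i^*$ lies in $J_i^*(A)$ via lower semicontinuity of the row minimum, and handling the boundary case $\omega(v_i)=0$---both of which the paper leaves implicit.
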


\begin{proof}
  Let $ D \in \Adj(G) $ and consider two sequences
  $\{D_k\}_{k=1}^{\infty} \subset \Adj(G)$ and $\{C_k\}_{k=1}^{\infty}
  \subset \Adj(G) $ such that $ C_k\in \subscr{f}{imcor}(D_k) $, for
  all $ k \in \mathbb{Z}_{>0}$, $ \lim_{k\rightarrow \infty}D_k=D $
  and $ \lim_{k\rightarrow \infty}C_k=C $.  Since $ J_i^*(D_k)
  \subseteq \{1,\ldots, n\} $ and $n$ is finite, there must exist a
  set $J_1 \times \dots \times J_n$ in $\until{n}^n$ that appears
  infinitely often in the sequence $\{ J_1^*(D_k) \times \dots \times
  J_n^*(D_k) \}_{k=1}^{\infty}$. Therefore, there exists a subsequence
  of $\{D_k\}_{k=1}^{\infty}$ which, with a slight abuse of notation
  and for simplicity, we denote in the same way, such that $
  J_i^*(D_k)=J_i $, for all $ k \in \mathbb{Z}_{\geq 1} $.  Now,
  because $ C_k\in \subscr{f}{imcor}(D_k) $, there exist
  $(j_1(k),\ldots, j_n(k)) \in J_1\times \ldots \times J_n$ for each $k \in
  \mathbb{Z}_{>0}$ such that one can write
  \begin{align}\label{eq:aux}
    (C_k)_{ij}=
    \begin{cases}
      (D_k)_{ij}+\omega_k(v_i)& \mathrm{if} \quad \omega_k(v_i)>0, \;
      \mathrm{and} \; j=j_i(k) ,
      \\
      (D_k)_{ij}, & \mathrm{otherwise} ,
    \end{cases}
  \end{align}
  where $ (D_k)_{ij} $ and $ (C_k)_{ij} $ denote, respectively, the
  entries of $ D_k $ and $ C_k $, and $\omega_k(v_i)$ is the imbalance
  of vertex $v_i$ in the weight assignment $D_k$. Reasoning as before,
  since $ J_1\times \ldots \times J_n $ is finite, there exists an
  element $ (j_1,\ldots, j_n)$ of this set that appears infinitely
  often in the sequence $\{(j_1(k),\ldots,
  j_n(k))\}_{k=1}^{\infty}$. Therefore, there exists a subsequence,
  which with a slight abuse of notation we denote in the same way,
  such that $(j_1(k),\ldots, j_n(k)) = (j_1,\dots,j_n)$ for all $k \in
  \mathbb{Z}_{>0} $.  Note that, for each $i \in \until{n}$ such that
  $\omega(v_i) >0$, the element $j_i$ is unique since otherwise the
  sequence $ \{C_k\}_{k=1}^{\infty} $ would not be
  convergent. Combining these facts with~\eqref{eq:aux} and taking the
  limit as $ k \rightarrow \infty$, we conclude that $ C \in
  \subscr{f}{imcor}(D) $, and hence $ \subscr{f}{imcor} $ is closed at
  $ D $, as claimed.
\end{proof}

Next, we characterize the fixed points of $\subscr{f}{imcor}$.

\begin{lemma}[Fixed points of
  $\subscr{f}{imcor}$]\label{lemma:eq_f_wb}
  $\subscr{f}{imcor}$ has at least one fixed point. Furthermore, $ A^*
  \in \Adj(G) $ is a fixed point if and only if $A^*$ is
  weight-balanced.
\end{lemma}
\begin{proof}
  Given that the digraph is strongly connected,
  Theorem~\ref{Theorem:Theorem_2_Loh} guarantees that it is
  weight-balanceable, and therefore at least one fixed point exists
  (if $ A\in \Adj(G) $ is weight-balanced, then $\subscr{f}{imcor}(A)
  = \{A\}$, and hence $A$ is a fixed point of $
  \subscr{f}{imcor}$). On the other hand, suppose $ A^*\in \Adj(G) $
  satisfies $A^* \in \subscr{f}{imcor}(A^*) $. We reason by
  contradiction. If $ A^* $ is not weight-balanced,
  then~\eqref{eq:basic-fact} would imply that there exists at least a
  vertex $ v\in V $ with $ \omega(v)>0 $.  From~\eqref{eq:f}, this
  would imply $A^* \notin \subscr{f}{imcor}(A^*)$, which is a
  contradiction.
\end{proof} 

The logic used by the \wbda to update edge weights consists of using
edges with the minimum weight. The next result shows that such logic
is powerful in terms of propagating a token (in our case, an
imbalance) across the network.

\begin{lemma}[Propagation of tokens via out-edges with minimum
  weight]\label{lemma:min_weight}
  Let~$G$ be a strongly connected weighted digraph and consider a
  finite number of tokens initially located at some nodes. If each
  node that possesses a token repeatedly passes it to one of her
  out-neighbors via an out-edge with the minimum weight and adds a
  positive constant to this weight, all the nodes will be visited by
  at least one token after a finite number of iterations.
\end{lemma}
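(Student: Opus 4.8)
The plan is to argue through a careful analysis of the set of nodes that are visited \emph{infinitely often}. First I would observe that the number of tokens is conserved under the dynamics: every node holding a token passes it to an out-neighbor, so tokens are never created nor destroyed, and since we start with at least one token there is always at least one token present at every iteration. Because $V$ is finite while the number of iterations is infinite, a pigeonhole argument guarantees that at least one vertex holds a token at infinitely many time steps. Let $V_\infty \subseteq V$ denote the resulting non-empty set of vertices visited infinitely often.

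The heart of the proof is the claim that $V_\infty$ is closed under taking out-neighbors, that is, $v \in V_\infty$ implies $\Nout(v) \subseteq V_\infty$. To establish this, fix $v \in V_\infty$ and partition its out-edges into those used infinitely often and those used only finitely often. Whenever $v$ is visited it forwards a token along a minimum-weight out-edge and increases that edge's weight by a fixed positive amount $c$; since $v$ has finitely many out-edges and is visited infinitely often, at least one out-edge is used infinitely often, and every infinitely-used out-edge accumulates weight and so grows without bound. By contrast, each finitely-used out-edge retains a bounded weight from some time onward. If the set of finitely-used out-edges were non-empty, then after a finite time every infinitely-used out-edge would exceed the bounded maximum weight of the finitely-used ones; from that moment on, every minimum-weight selection at $v$ would land on a finitely-used edge. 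But $v$ is visited infinitely often, so these finitely-used edges would collectively be selected infinitely often, contradicting the fact that there are finitely many of them, each used only finitely often. Hence every out-edge of $v$ is used infinitely often, so every out-neighbor of $v$ receives a token infinitely often and lies in $V_\infty$.

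With the closure property in hand, the conclusion follows from strong connectivity. Starting from any vertex of $V_\infty$ and iterating the out-neighbor closure shows that every vertex reachable by a directed path from $V_\infty$ belongs to $V_\infty$; since $G$ is strongly connected, every vertex is reachable from any vertex of $V_\infty$, whence $V_\infty = V$. Thus every vertex is visited infinitely often and, in particular, each vertex is visited at some finite iteration; taking the maximum of these finitely many first-visit times yields a finite iteration by which all vertices have been visited.

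I expect the main obstacle to be the quantitative bookkeeping in the closure claim, specifically justifying that an infinitely-used out-edge accumulates unbounded weight. This is precisely where the hypothesis that a \emph{positive constant} is added at each use is essential: it provides a uniform lower bound $c>0$ on each increment, so $k$ uses force the weight up by at least $kc$. Care is also needed to treat ties in the minimum-weight selection and to make the token-conservation and pigeonhole steps precise, but these are routine once the divergence of the infinitely-used edge weights is secured.
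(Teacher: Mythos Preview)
Your argument is correct and takes a genuinely different, somewhat cleaner route than the paper's. The paper defines $\visited(t,t_0)$, the set of nodes visited by some token between times $t_0$ and $t$, lets $Y(t_0)$ be its limit as $t\to\infty$, and proves $|Y(t_0)|=|V|$ by an induction on possible cardinalities: assuming $|Y(t_0)|=m<|V|$, it picks a boundary vertex $v_k\in Y(t_0)$ with an out-edge to $V\setminus Y(t_0)$, invokes the induction hypothesis (applied to a later start time $t_1+1$) to conclude that tokens must return to $v_k$ repeatedly, and then argues that the boundary edge eventually carries the minimum weight, forcing a token out of $Y(t_0)$. Your approach replaces this inductive scaffolding by the single structural claim that $V_\infty$ is closed under out-neighbors, proved via the partition of out-edges into infinitely-used and finitely-used and the divergence of weights on the former. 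Both proofs rest on the same underlying mechanism---frequently traversed out-edges accumulate weight and eventually cease to be minimum---but your packaging avoids the somewhat delicate ``tokens keep returning to $v_k$'' step and the double induction over start times and cardinalities. The paper's formulation, on the other hand, is phrased so that it directly yields the visited set from an arbitrary start time, which is convenient when the lemma is later reused inside the LaSalle argument; your $V_\infty$ gives the same conclusion but with one extra (trivial) unwrapping.
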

\begin{proof}
  Let $V$ denote the vertex set of $G $.  Let $t_0$ denote an
  arbitrary initial time and $\visited(t,t_0)$ be the set of nodes
  visited by any of the tokens up to time~$t$. Since $V$ is finite and
  $\visited(t,t_0) \subset \visited (t+1,t_0)$, for $t \in
  \integersnonnegative$, we deduce that there exists $T$ such that
  $\visited(t,t_0) = Y(t_0) \subseteq V$, for all $t \ge T$. Let us
  show that $ Y(t_0) = V $ for all $t_0$.  We do this by discarding
  any other possibility. Clearly, $|Y(t_0)|$ cannot be $1$ because if
  a node has a token, she will pass it to some other node.  Let $m <
  |V|$ and assume that we have shown that $ |Y(t_0)| \neq 1,\ldots,
  m-1 $ for all $t_0$. Choose any $t_0$ and let us show that $
  |Y(t_0)|\neq m$. Suppose otherwise, i.e., $|Y(t_0)|=m$.  Since the
  digraph is strongly connected, there exists at least one edge from a
  member of $Y(t_0)$, say $ v_k $, to a member of $ V \setminus
  Y(t_0)$, say $ v_{k+1} $.  Since $v_k \in Y(t_0)$, $v_k$ has one of
  the tokens at some point in time, say $t_1$.  Suppose $ v_k $ sends
  this token via an out-edge to a member of $ Y(t_0) $ and increases
  the weight on this edge; otherwise, $|Y(t_0)|\ge m+1$, which is a
  contradiction. By hypothesis, the tokens never reach $ V\setminus
  Y(t_0) $ and get passed in $Y(t_0)$ while increasing the weight of
  edges among nodes in $Y(t_0)$.  We claim that, after a finite time,
  at least one of the tokens comes back to $ v_k $.  Suppose
  otherwise, i.e., for $t>t_1$, no token will ever visit $v_k$. Since
  $Y(t_1+1) \subseteq Y(t_0)$ and $v_k \not \in Y(t_1+1) $, we
  conclude that $|Y(t_1+1)|\le m-1$, which is a
  contradiction. Therefore, a token must visit $v_k$ after $t_1$. When
  this occurs, node $ v_k $ will choose the out-edge with minimum
  weight for sending the token. The whole process gets repeated and
  thus, after a finite time, the weight on the out-edge to $ v_{k+1} $
  has the minimum weight, and thus $v_k$ sends a token to $v_{k+1}$,
  implying $|Y(t_0)| \ge m+1$, which is a contradiction. Therefore,
  $|Y(t_0)|=|V|$, as claimed.
\end{proof}

The last ingredient we need in order to characterize the convergence
of the \wbda is the Lyapunov function $ \subscr{V}{wb} : \Adj(G)
\rightarrow \mathbb{R}_{\geq 0} $ defined by
\begin{equation}\label{eq:V}
  \subscr{V}{wb}(A)=\sum_{i=1}^n
  |\sum_{j=1}^na_{ij}-\sum_{j=1}^na_{ji}| = \sum_{i=1}^n |\omega(v_i)|.
\end{equation}
This function is continuous on $ \Adj(G) $. Note that $V(A) = 0$ if
and only if $A$ is weight-balanced.  We are now ready to state our
convergence result.

\begin{theorem}[Convergence of the \wbda]\label{theorem:main}
  For a strongly connected digraph $ G $, any evolution under the
  \wbda converges in finite time to a weight-balanced adjacency
  matrix.
\end{theorem}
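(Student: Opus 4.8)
The plan is to apply the set-valued LaSalle invariance principle (Theorem~\ref{theorem:LaSalle}) to the map $\subscr{f}{imcor}$ on the state space $\Adj(G)$, using the Lyapunov function $\subscr{V}{wb}$ defined in~\eqref{eq:V}, and then upgrade the resulting asymptotic convergence statement to finite-time convergence. The groundwork is already in place: Lemma~\ref{lemma:f_closed} gives closedness of $\subscr{f}{imcor}$, Lemma~\ref{lemma:eq_f_wb} identifies the fixed points with the weight-balanced matrices, and $\subscr{V}{wb}$ is continuous and vanishes exactly on the weight-balanced set.

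Let me think about what I need to verify. First I would fix a strongly connected $G$ and an arbitrary initial weight assignment $A_0 \in \Adj(G)$, and identify a suitable closed, strongly positively invariant set $W$ containing the evolution. A natural candidate is a sublevel-type set: since $\subscr{f}{imcor}$ only ever \emph{increases} individual edge weights (by $\omega(v_i)>0$ on one out-edge), the weights are monotone nondecreasing along any evolution, so I would need to argue the evolution stays bounded so that LaSalle's boundedness hypothesis holds. The key observation is that $\subscr{V}{wb}$ is non-increasing along $\subscr{f}{imcor}$: when vertex $v_i$ with $\omega(v_i)>0$ adds $\omega(v_i)$ to one out-edge, its own imbalance drops to zero, while the imbalance of the receiving out-neighbor decreases (becomes more negative or less positive) by that amount; a careful accounting via the triangle inequality shows $\sum_i|\omega(v_i)|$ cannot increase. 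This monotonicity of $\subscr{V}{wb}$, combined with the fact that the \emph{total} weight added in one round equals $\sum_{i:\omega(v_i)>0}\omega(v_i) \le \subscr{V}{wb}(A)$, bounds the per-step growth and lets me confine the evolution to a compact set $W$.

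The main obstacle, and the crux of the argument, is characterizing the largest weakly positively invariant set $S$ inside the set where $\subscr{V}{wb}$ is preserved, and showing that on $S$ every trajectory is in fact weight-balanced --- i.e.\ that $\subscr{V}{wb}$ strictly decreases unless we have already reached a fixed point. This is precisely where Lemma~\ref{lemma:min_weight} does the heavy lifting. If an evolution keeps $\subscr{V}{wb}$ constant yet is not weight-balanced, there persists a vertex with positive imbalance; I would interpret these imbalances as tokens and invoke Lemma~\ref{lemma:min_weight} to conclude that the imbalance must propagate along minimum-weight out-edges and eventually reach every vertex. Since by~\eqref{eq:basic-fact} the imbalances sum to zero, some vertex has negative imbalance, and when a positive imbalance is routed to it the two partially cancel, forcing a strict decrease of $\subscr{V}{wb}$ --- contradicting constancy. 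Hence $S$ consists only of weight-balanced (fixed) matrices, and LaSalle guarantees convergence to this set.

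Finally I would convert asymptotic convergence into \emph{finite-time} convergence, which I expect to be a clean step rather than the main difficulty. Here I would use the rationality/integrality reduction: starting from integer weights, every update adds an integer imbalance, so $\subscr{V}{wb}$ takes values in a discrete set (nonnegative integers) and is non-increasing, hence can decrease only finitely many times. Combined with the strict-decrease dichotomy established above --- $\subscr{V}{wb}$ strictly decreases whenever the current matrix is not weight-balanced --- this forces the evolution to reach $\subscr{V}{wb}=0$, i.e.\ a weight-balanced matrix, after finitely many rounds. For general (real) initial weights, one can argue that the finitely many ``active'' decrements available before the token-propagation mechanism of Lemma~\ref{lemma:min_weight} equalizes all imbalances still terminate in finite time, completing the proof.
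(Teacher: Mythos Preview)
Your overall strategy matches the paper's: apply the set-valued LaSalle principle with $\subscr{V}{wb}$, use Lemma~\ref{lemma:min_weight} to rule out positive limit values, and then upgrade to finite time. However, there is a genuine gap in the boundedness step, plus two smaller slips.

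\textbf{The boundedness gap.} You propose to obtain a compact invariant set $W$ from the observation that the total weight added per round is at most $\subscr{V}{wb}(A)\le \subscr{V}{wb}(A_0)$. But a per-step bound on the increment does not by itself bound the trajectory: edge weights are monotone nondecreasing and the algorithm could, a priori, run for infinitely many rounds while adding a positive amount at each one. In the paper the boundedness argument is not a throwaway; it is proved by contradiction, showing that an unbounded evolution would force some positive imbalance to be passed around forever without ever reaching the set of vertices carrying negative imbalance, and this is ruled out precisely by Lemma~\ref{lemma:min_weight}. The technical device that makes this work is the constant
\[
\nu \;=\; \min_{k}\ \min_{(i_1,\dots,i_k)} \bigl\{\, |\omega_0(v_{i_1})+\cdots+\omega_0(v_{i_k})| \neq 0 \,\bigr\},
\]
the smallest nonzero absolute value among all partial sums of the initial imbalances. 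Because every imbalance at every later time is such a partial sum, $\nu$ lower-bounds every nonzero imbalance along the evolution and keeps the token-propagation argument quantitative. The same $\nu$ is what drives the finite-time conclusion for \emph{general real} initial weights: Lemma~\ref{lemma:min_weight} guarantees that after finitely many rounds a positive token meets a negative-imbalance vertex and $\subscr{V}{wb}$ drops by at least $2\nu$. So your closing remark about the real case is exactly the place where this constant is needed, and the step is not ``clean'' without it.

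\textbf{Two smaller points.} When $v_i$ pushes $\omega(v_i)$ along $(v_i,v_j)$, the weighted in-degree of $v_j$ goes \emph{up}, so $\omega(v_j)$ \emph{increases} (not decreases, as you wrote); the correct bookkeeping is that $|\omega(v_i)|$ drops by $\epsilon$ while $|\omega(v_j)|$ rises by at most $\epsilon$, which is how the paper argues non-increase of $\subscr{V}{wb}$. Also, $\subscr{V}{wb}$ does \emph{not} strictly decrease at every non-weight-balanced state---it can plateau for several rounds (cf.\ the worked example where it stays at $4$ for four consecutive steps)---so your integer-case finite-time argument should read ``eventually strictly decreases, by Lemma~\ref{lemma:min_weight}'' rather than ``strictly decreases whenever not weight-balanced.''
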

\begin{proof}
  Recall that the evolutions under the \wbda correspond to
  trajectories of the discrete-time dynamical system defined by
  $\subscr{f}{imcor})$. For an initial condition $A \in \Adj (G)$,
  consider the sublevel set $\subscr{V}{wb}^{-1}(\le
  \subscr{V}{wb}(A)) = \{B\in \Adj(G) \ | \ 0\leq
  \subscr{V}{wb}(B)\leq \subscr{V}{wb}(A) \} $. Since $ \subscr{V}{wb}
  $ is continuous, this set is closed.  Let us show that the
  continuous function $\subscr{V}{wb}$ is non-increasing along $
  \subscr{f}{imcor} $, and consequently $\subscr{V}{wb}^{-1}(\le
  \subscr{V}{wb}(A))$ is strongly positively invariant with respect to
  $ \subscr{f}{imcor}$. Note that the weight of any given edge is only
  modified by the agent who has it as an out-edge.  Consider a weight
  change done by the algorithm on an arbitrary edge $ (v_i,v_j) $, $
  v_i,v_j \in V $, and let us see the effect it has on the imbalances
  of the two vertices it affects. According to $ \subscr{f}{imcor} $,
  $ v_i $ increases the weight on her out-edge to $ v_j $ by $
  \epsilon \in \mathbb{R}_{>0} $ in order to balance
  herself. Consequently, the imbalance of agent $ v_j $ increases by
  at most $ \epsilon $.  Considered together, the weight change
  performed in the edge decrease the imbalance of $v_i $ by $\epsilon$
  while increasing the imbalance of $v_j$ by at most $\epsilon $; thus
  the value of that $\subscr{V}{wb}$ does not increase.  Since the
  edge $(v_i,v_j) $ is arbitrary, from~\eqref{eq:V}, this argument
  implies that $\subscr{V}{wb}$ is non-increasing along $
  \subscr{f}{imcor}$.

  Next, let us show that all evolutions of $
  (\Adj(G),\subscr{f}{imcor}) $ with initial condition in $
  \subscr{V}{wb}^{-1}(\le \subscr{V}{wb}(A)) $ are bounded.  Suppose
  otherwise, and take an initial condition that gives rise to an
  unbounded trajectory.  This implies that there exists, infinitely
  often, an agent $ v \in V$ with some positive imbalance. Let us
  justify that the infimum $\theta$ of all these positive imbalances
  is positive too. Since, by definition of $\subscr{f}{imcor} $, any
  imbalance in the timesteps after the initial one is a linear
  combination of the initial imbalances with coefficients $ 0 $ or $ 1
  $, the following inequality holds
  \[
  \theta\geq \nu = \min_{k\in\{1,\ldots,n\}} \min_{(i_1,\dots,i_k) \in
    C(n,k)} \{ |\omega_0(v_{i_1})+\omega_0(v_{i_2})
  +\ldots+\omega_0(v_{i_k})|\neq 0\},
  \]
  where $C(n,k)$ denotes the set of combinations of $k$ elements out
  of $n$, and $ \omega_0(v) $ refers to the initial imbalance of $ v
  \in V $.  Note that $\nu>0$.  Since $ \sum_{v\in V}\omega(v)=0 $,
  there exists a set of agents $ Y=\{v_1,\ldots, v_k\} \subset V $, $
  k \in \mathbb{Z}_{>0} $, with negative imbalance, such that $
  \sum_{v_i\in Y}\omega(v_i) $ is at most $ -\theta $ infinitely
  often. We show that this actually must happen always.  Suppose that
  at some point in the execution of the algorithm $\sum_{v_i\in
    Y}\omega(v_i)>-\theta $. After that, it will never happen that $
  \sum_{v_i\in Y}\omega(v_i)=-\theta $, otherwise, at least one of the
  agents has decreased her imbalance from a negative value, which is
  not possible by the definition of $ \subscr{f}{imcor} $.  But this
  contradicts $ \sum_{v_i\in Y}\omega(v_i) $ being at most $ -\theta $
  infinitely often. The above argument shows that there exists a
  positive imbalance of at least $ \theta $ that does not visit, even
  once, any edge going into the set $ Y $. Finally, since $G$ is
  strongly connected, this contradicts Lemma~\ref{lemma:min_weight}.

  Finally, recall that by Lemma~\ref{lemma:f_closed}, the map $
  \subscr{f}{imcor} $ is closed on $ \subscr{V}{wb}^{-1}(\le
  \subscr{V}{wb}(A))$.  With all the above hypotheses satisfied, the
  application of the LaSalle Invariance Principle for set-valued
  dynamical systems, cf. Theorem~\ref{theorem:LaSalle}, implies that
  the algorithm evolution approaches a set of the form $
  \subscr{V}{wb}^{-1}(c)\cap S $, where $ c \in \mathbb{R} $ and $ S $
  is the largest weakly positively invariant set contained in $ \{B\in
  \subscr{V}{wb}^{-1}(\le \subscr{V}{wb}(A)) \, | \, \text{there
    exists } \ C\in \subscr{f}{imcor}(B) \text{ such that }
  \subscr{V}{wb}(C)=\subscr{V}{wb}(B)\}$. In order to complete the
  proof, we need to show that $ c=0 $. We reason by
  contradiction. Suppose $ c>0$ and let $ A \in S \cap
  \subscr{V}{wb}^{-1}(c) $. Since $S$ is weakly positively invariant,
  there exists an evolution starting from $A$ which is contained in $
  S $, and hence stays in the level set $ \subscr{V}{wb}^{-1}(c) $.
  Let us show that after a finite time, this evolution will leave
  $\subscr{V}{wb}^{-1}(c) $, hence reaching a contradiction.  Since $
  c \in \mathbb{R}_{>0} $, there exists at least one agent $ v_i $
  with positive imbalance $ \omega(v_i)>0 $. Therefore, since $
  \sum_{v\in V}\omega(v)=0 $, there exists a set of agents whose
  imbalances are negative and sum at least $ -\omega(v_i) $.  Since
  the digraph is strongly connected, by Lemma~\ref{lemma:min_weight},
  after a finite time the positive imbalance $ \omega(v_i) $ will
  visit this set, which would strictly decrease the value of
  $\subscr{V}{wb}$, reaching a contradiction.  Finally, the
  finite-time convergence to the set of weight-balanced adjacency
  matrices follows from noting that the value of $\subscr{V}{wb}$ is
  initially finite and, by Lemma~\ref{lemma:min_weight}, there exists
  a finite number of iterations after which $\subscr{V}{wb}$ is
  guaranteed to have decreased at least an amount $2 \nu$.  The
  convergence to a weight-balanced adjacency matrix is a consequence
  of the finite-time convergence to the set.
\end{proof}


\begin{remark}\longthmtitle{Time complexity of the \wbda}
  {\rm Roughly speaking, the time complexity of an algorithm
    corresponds to the number of rounds required in order to achieve
    its objective. More formal definitions can be found
    in~\cite{FB-JC-SM:08cor,NAL:97,DP:00, GT:01}.  The
    characterization of the time complexity of the \wbda is an open
    problem, however, we characterize in
    Section~\ref{sec:weight-balanced-modified} the time complexity of
    a modified version of this algorithm.  \oprocend }
\end{remark}

\begin{example}\label{ex:dis_digraph_ex}\longthmtitle{Execution of the
    \wbda} {\rm Consider the digraph of
    Figure~\ref{fig:ex1}(a). The \wbda converges to a weight-balanced
    digraph in $ 6 $ rounds, as illustrated in
    Figure~\ref{fig:dis_digraph_ex_ev}.}
    \oprocend
    \begin{figure}
      \[
      (1) \ \ \xymatrix{ \bullet \ar[dr]^1 \ar[ddr]^1 & & \bullet
        \ar@{-->}[ll]^{\textbf{2}} \\ 
      & \bullet \ar@{-->}[ur]^{\textbf{3}} &\\
      \bullet \ar[ur]^1 & \bullet \ar[u]_1 \ar[uur]^1 \ar[l]^1 & } 
      \ \ (2) \ \ 
    \xymatrix{ \bullet \ar[dr]^1 \ar[ddr]^1 & & \bullet \ar@{-->}[ll]^{\textbf{4}} \\
      & \bullet \ar[ur]^{\textbf{3}} &\\
      \bullet \ar[ur]^1 & \bullet \ar[u]^1 \ar[uur]^1 \ar[l]^1 & }
      \ \ (3) \ \ 
    \xymatrix{ \bullet \ar@{-->}[dr]^{\textbf{3}} \ar[ddr]^1 & & \bullet \ar[ll]_{\textbf{4}} \\
      & \bullet \ar[ur]^{\textbf{3}} &\\
      \bullet \ar[ur]^1 & \bullet \ar[u]^1 \ar[uur]^1 \ar[l]^1 & }
    \]
    \[
    (4) \ \
    \xymatrix{ \bullet \ar[dr]^{\textbf{3}} \ar[ddr]^1 & & \bullet \ar[ll]_{\textbf{4}} \\
      & \bullet \ar@{-->}[ur]^{\textbf{5}} &\\
      \bullet \ar[ur]^1 & \bullet \ar[u]^1 \ar[uur]^1 \ar[l]_1 & }
      \ \ (5) \ \ \xymatrix{ \bullet \ar[dr]^{\textbf{3}} \ar[ddr]^1 & & \bullet \ar@{-->}[ll]_{\textbf{6}} \\
      & \bullet \ar[ur]^{\textbf{5}} &\\
      \bullet \ar[ur]^1 & \bullet \ar[u]^1 \ar[uur]^1 \ar[l]^1 & } 
      \ \ (6) \ \
    \xymatrix{ \bullet \ar[dr]^{\textbf{3}} \ar@{-->}[ddr]_{\textbf{3}} & & \bullet \ar[ll]_{\textbf{6}} \\
      & \bullet \ar[ur]^{\textbf{5}} &\\
      \bullet \ar[ur]^1 & \bullet \ar[u]^1 \ar[uur]^1 \ar[l]_1 & }
    \]
    \caption{Execution of the \wbda for the digraph of
      Figure~\ref{fig:ex1}(a). The initial condition is the adjacency
      matrix where all edges are assigned weight $1$. In each round,
      the edges which are used for sending messages are shown
      dashed. One can observe that the Lyapunov function $
      \subscr{V}{wb}(A_0)$ takes the following values in subsequent
      time steps: $6$, $4$, $4$, $4$, $4$, $4$, $0$.}
    \label{fig:dis_digraph_ex_ev}
  \end{figure}
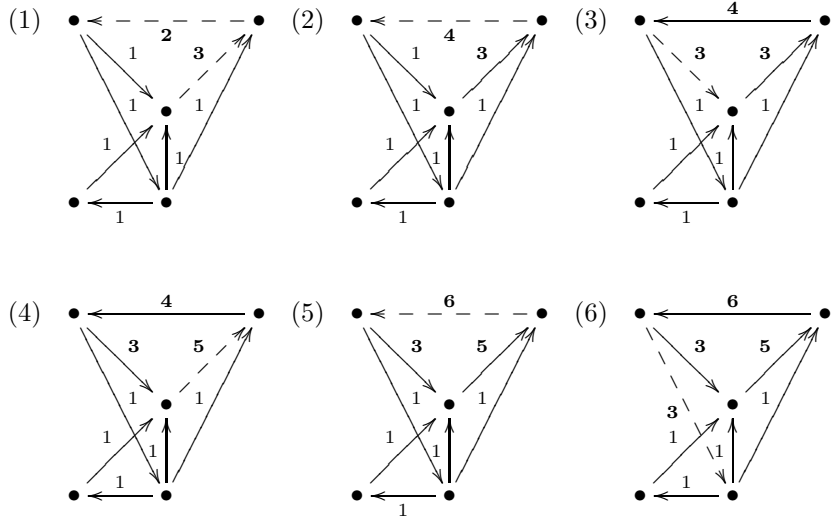
\end{example}

\subsection{The \wbmda}\label{sec:weight-balanced-modified}

In this section, we modify the
\wbda to synthesize a strategy, distributed over the mirror of the
digraph, which converges quickly to a weight-balanced digraph. This
procedure can be employed to construct a weight-balanced digraph
without computing any cycle. We start by an informal description of
the \wbmda. Let $ G $ be a strongly connected digraph of order $ n\in
\mathbb{Z}_{>0} $:
\begin{enumerate}
\item at each round, agents are able to receive messages from their
  in-neighbors. There is an initial round when each
  agent assigns a weight to each out-edge and sends it to her
  corresponding out-neighbor. In this way, everybody can compute her
  in-degree. After this, at most, only one out-edge per agent is
  changed in each round;
\item each agent with positive imbalance adds it to the weight on one
  of out-edges to one of the out-neighbors with minimum imbalance and
  sends a message to the corresponding out-neighbor informing her of
  the change.
\item \emph{multiple-messages rule}: if an agent receives more than
  one message from her in-neighbors, she adds the received messages to
  her imbalance;
\item \emph{fair-decision rule}: if the agent has more than one
  out-neighbor with the exact same minimum imbalance, it randomly
  chooses one.  However, the next time it needs to choose between her
  out-neighbors with the same minimum imbalance, she will choose a new
  out-neighbor.
\end{enumerate}

In the following, we give a formal definition of this algorithm. 
For all $ i\in\{1,\ldots, n\} $, let
\[
\subscr{\Omega}{min}(v_i)=\{v_j \in \upscr{\mathcal{N}}{out}(v_i) \ | \
\omega(v_j)=\min_{v_l\in \upscr{\mathcal{N}}{out}(v_i)} \omega(v_l)\}.
\]
We define $ \subscr{g}{imcor}: \Adj(G) \rightrightarrows \Adj(G) $
by
\begin{align}\label{eq:g}
  \subscr{g}{imcor}(A)&= \Big\{ B\in \Adj(G) \ | \ \text{for each} \;
  i \in \{1,\ldots, n\}, \text{there exists } j^* \; \mathrm{with} \;
  v_{j^*} \in \subscr{\Omega}{min}(v_i) \nonumber
  \\
  & \qquad \text{such that } b_{ij}=
  \begin{cases}
    a_{ij}+\omega(v_i), & \mathrm{if} \; \omega(v_i)>0 \;
    \text{and} \; j =j^*
    \\
    a_{ij}, & \mathrm{otherwise}
  \end{cases}
  \Big\}.
\end{align}
Note that the evolutions of the \wbmda are a subset of all the
evolutions of the set-valued map $ \subscr{g}{imcor}$ (because of the
fair-decision rule). The next result shows that this algorithm
converges in finite time to a weight-balanced digraph. 

\begin{theorem}[Convergence of the
  \wbmda]\label{theorem:modified_distributed}
  For a strongly connected digraph $ G $, any
  evolution under the \wbmda converges in finite time to a
  weight-balanced adjacency matrix.
\end{theorem}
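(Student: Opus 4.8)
The plan is to follow the template of the convergence proof for the \wbda (Theorem~\ref{theorem:main}), reusing the Lyapunov function $\subscr{V}{wb}(A)=\sum_{i=1}^n|\omega(v_i)|$, but replacing the token-propagation mechanism of Lemma~\ref{lemma:min_weight} by one adapted to minimum-\emph{imbalance} routing together with the fair-decision rule. First I would verify that $\subscr{V}{wb}$ is non-increasing along $\subscr{g}{imcor}$: when $v_i$ with $\omega(v_i)>0$ adds $\omega(v_i)$ to the out-edge toward $v_{j^*}\in\subscr{\Omega}{min}(v_i)$, its own imbalance drops to $0$ while each receiver's imbalance grows by the total mass it receives, so the triangle inequality gives $\subscr{V}{wb}(B)\le\subscr{V}{wb}(A)$ exactly as for $\subscr{f}{imcor}$. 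The gain over the \wbda lies in characterizing equality: since each positive agent routes to an out-neighbor of \emph{minimum} imbalance, the step is strict unless no positive agent has a negative out-neighbor. Hence $\subscr{V}{wb}$ can remain constant only on configurations in which positive agents send exclusively into the non-negative set.

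Next I would extract a second monovariant. Because every agent with non-positive imbalance neither sends nor loses mass, and every positive agent resets to a non-negative value after sending, the quantity $\mu(k)=\min_{v\in V}\omega_k(v)$ is non-decreasing along any evolution. As in Theorem~\ref{theorem:main}, the imbalances stay in the discrete set of subset-sums of the initial imbalances, so nonzero imbalance magnitudes are bounded below by a fixed $\nu>0$; thus $\mu$ ranges over this discrete set and, being non-decreasing and bounded above by $0$, stabilizes at some $\mu^\ast\le 0$. If $\mu^\ast=0$ all imbalances are non-negative and sum to zero, i.e. the matrix is already weight-balanced and we are done, so I assume $\mu^\ast<0$ and argue that $\subscr{V}{wb}$ cannot stay constant indefinitely.

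The hard part is this progress step. Suppose, for contradiction, that from some round on $\subscr{V}{wb}$ is constant. Let $D$ be the set of agents that never carry positive mass thereafter; each such agent stays non-positive, and it cannot receive positive mass either, for that would either turn it positive or, if it is already negative, strictly decrease $\subscr{V}{wb}$; hence every agent of $D$ is frozen at a value $\le 0$. The minimizers realizing $\mu^\ast$ lie in $D$, so $D\neq\emptyset$, while the conserved positive mass, which equals $\tfrac12\subscr{V}{wb}>0$, shows the complementary set $S$ of agents that do carry positive mass is also nonempty. The characterization of constant steps forces every agent of $S$ to have no negative out-neighbor, so any edge from $S$ into $D$ must land on an agent frozen at exactly $0$; strong connectivity provides such an edge $u\to w$ with $u\in S$, $w\in D$, $\omega(w)=0$. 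Since the positive mass is conserved and supported on finitely many agents, some agent is positive infinitely often, and I would track how the fair-decision rule forces this mass to reach $u$ and then cross to $w$: whenever $u$ is positive, $w$, being at the minimum possible non-negative value $0$, is among $u$'s minimum out-neighbors, and the fair-decision rule guarantees $u$ eventually selects $w$, injecting positive mass into a $D$-agent, a contradiction. Making precise that the mass is driven onto such a boundary edge infinitely often, rather than circulating forever strictly inside $S$, is the crux of the argument and the place where the fair-decision rule is indispensable; without it a deterministic tie-breaking could trap the mass away from the frozen boundary.

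Finally, once it is shown that $\subscr{V}{wb}$ strictly decreases within a bounded number of rounds whenever the digraph is not balanced, the discreteness of the imbalance values (each such decrease being by at least a fixed positive amount) yields convergence to $\subscr{V}{wb}=0$, i.e. to a weight-balanced adjacency matrix, in finite time. I would note that one could alternatively phrase the stability part through the set-valued LaSalle principle, cf.\ Theorem~\ref{theorem:LaSalle}, after establishing closedness of $\subscr{g}{imcor}$ in analogy with Lemma~\ref{lemma:f_closed}; however, because the fair-decision rule makes the \wbmda evolutions a strict subset of the evolutions of $\subscr{g}{imcor}$, and the progress argument genuinely relies on that rule, the direct finite-time argument is the cleaner route.
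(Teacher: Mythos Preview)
Your proposal is correct and in fact supplies more detail than the paper's own proof, which proceeds via the set-valued LaSalle principle (Theorem~\ref{theorem:LaSalle}) exactly as you anticipate in your final remark: the paper verifies that $\subscr{V}{wb}$ is non-increasing along $\subscr{g}{imcor}$, argues closedness of $\subscr{g}{imcor}$ by analogy with Lemma~\ref{lemma:f_closed}, invokes an unstated variant of Lemma~\ref{lemma:min_weight} for boundedness, and then asserts $c=0$ because $\subscr{V}{wb}$ must drop by a bounded amount in bounded time. Your direct route bypasses LaSalle and instead unpacks the progress step explicitly via the $S/D$ decomposition, the second monovariant $\mu$, and the fair-decision rule; this is precisely the point the paper leaves implicit, and you correctly flag it as the crux. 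One small refinement worth making: to guarantee that the boundary vertex $u\in S$ with an edge into $D$ is itself positive infinitely often (rather than merely once), enlarge $T$ so that $S$ coincides with the set of agents positive infinitely often---possible since only finitely many agents can be positive only finitely many times. With that adjustment the argument closes cleanly, and it has the merit of making transparent why the fair-decision rule is indispensable, something the LaSalle formulation (stated for all evolutions of $\subscr{g}{imcor}$, not just the fair-decision ones) tends to obscure.
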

\begin{proof}
  For an initial condition $A \in \Adj (G)$, consider the sublevel set
  $\subscr{V}{wb}^{-1}(\le \subscr{V}{wb}(A))$.  Similarly to the
  proof of Theorem~\ref{theorem:main}, observe that the continuous
  function $ \subscr{V}{wb} $ is non-increasing along $
  \subscr{g}{imcor} $ and thus $ \subscr{V}{wb}^{-1}(\le
  \subscr{V}{wb}(A)) $ is strongly positively invariant with respect
  to $ \subscr{g}{imcor} $.
An argument similar to the one in Lemma~\ref{lemma:f_closed} shows that 
   the map $ \subscr{g}{imcor} $ is closed at 
  $ \subscr{V}{wb}^{-1}(\le \subscr{V}{wb}(A))$.
  Now, pick an evolution of $ (\Adj(G),\subscr{g}{imcor}) $ with
  initial condition in $ \subscr{V}{wb}^{-1}(\le \subscr{V}{wb}(A))$
  that satisfies the fair-decision rule.  Then a similar version of
  Lemma~\ref{lemma:min_weight} can be used to establish that this
  evolution is bounded.  Thus by the LaSalle Invariance Principle for
  set-valued dynamical systems, Theorem~\ref{theorem:LaSalle}, this
  evolution approaches a set of the form $ \subscr{V}{wb}^{-1}(c)\cap
  S $, where $ c \in \mathbb{R}_{\geq 0} $ and $ S $ is the largest
  weakly positively invariant set contained in $ \{B\in
  \subscr{V}{wb}^{-1}(\le \subscr{V}{wb}(A)) \, | \, \text{there
    exists } \ C\in \subscr{g}{imcor}(B) \text{ such that }
  \subscr{V}{wb}(C)=\subscr{V}{wb}(B)\}$.  The fact that $c$ must be
  zero follows from the observation that, under the \wbmda, the value
  of $\subscr{V}{wb}$ decreases by a finite amount, bounded away by a
  positive constant, after a finite number of iterations.  The
  convergence in finite time can also be justified in a similar way to
  Theorem~\ref{theorem:main}.
\end{proof}

\begin{example}[Execution of the \wbmda]\label{example:mirrow} {\rm
    Figure~\ref{fig:example_mirror} shows a strongly connected
    digraph.
    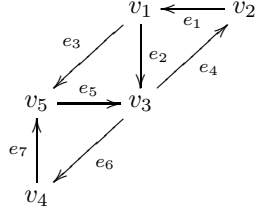
\begin{figure}
      \[
      \xymatrix{& v_1 \ar[d]^{e_2} \ar[dl]_{e_3} & v_2 \ar[l]^{e_1}\\
        v_5 \ar[r]^{e_5} & v_3 \ar[dl]^{e_6} \ar[ur]_{e_4} \\
        v_4 \ar[u]^{e_7} &} 	  
      \]
      \caption{A strongly connected digraph.}
      \label{fig:example_mirror}
    \end{figure}
    Figure~\ref{fig:mod_ex_1} shows an execution of the \wbmda for
    this digraph that converges in three rounds to a weight-balanced
    digraph.
    \begin{figure}[htb]
      \[
      \xymatrix{ & -1 \ar[d]_1 \ar[dl]_1 & 0 \ar[l]_1 & \\
        +1 \ar[r]^1 & 0 \ar[dl]^1 \ar[ur]^1 &\\
        0 \ar[u]^1 & & }\quad 
      \xymatrix{ & -1 \ar[d]_1 \ar[dl]_1 & 0 \ar[l]_1 & \\
        0 \ar@{-->}[r]_{\textbf{2}} & +1 \ar[dl]^1 \ar[ur]^1 &\\
        0 \ar[u]^1 & & }\quad 
      \xymatrix{ & -1 \ar[d]_1 \ar[dl]_1 & +1 \ar[l]_1 & \\
        0 \ar[r]_{\textbf{2}} & 0 \ar[dl]^1 \ar@{-->}[ur]^{\textbf{2}} &\\
        0 \ar[u]^1 & & }       
      \]
      \begin{center}
        \[
        \xymatrix{ & v_2
          \ar[d]_1 \ar[dl]_1 & v_1 \ar@{-->}[l]_{\textbf{2}} & \\
          v_3 \ar@{-->}[r]_{\textbf{2}} & v_4 \ar[dl]^1
          \ar@{-->}[ur]^{\textbf{2}} &\\
          v_5 \ar[u]^1 & & }
        \]
      \end{center}
      \caption{An execution of \wbmda for the digraph of
        Figure~\ref{fig:example_mirror}. The dashed lines show
        the edges which have been used to send messages. }
     \label{fig:mod_ex_1}
   \end{figure}
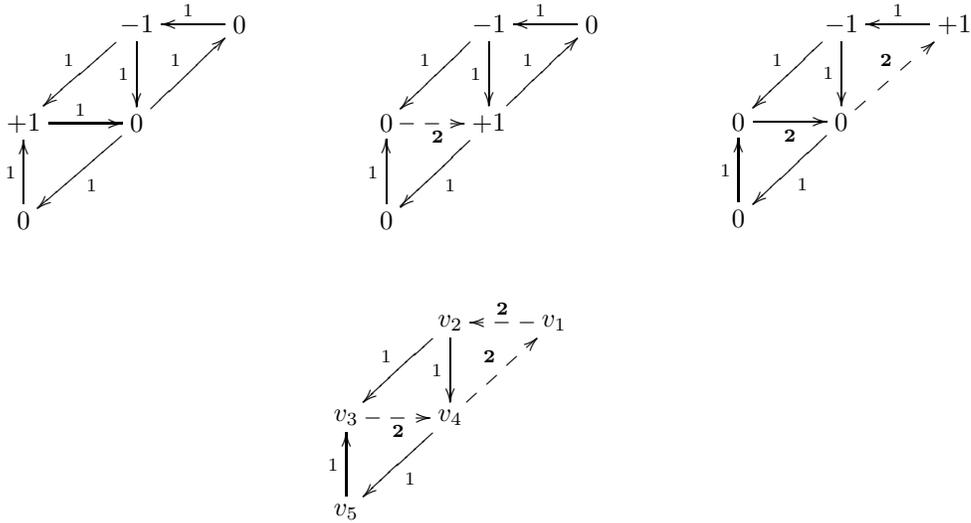
   Note that agent $ v_4 $ has two out-neighbors with the same
   imbalance, namely $ v_1 $ and $ v_5$.  If $v_4$ decides to update
   the weight on the edge $(v_4,v_5)$ to correct its imbalance, then
   the fair-decision rule will make her choose the edge $(v_4,v_1)$
   the next time.  Figure~\ref{fig:mod_ex_2} shows the result of the
   execution of the \wbmda in such a case.
   \begin{figure}[htb]
     \[
     \xymatrix{ & v_2 \ar[d]_1\ar[dl]_1 & v_1 \ar@{-->}[l]_{\textbf{2}} & \\
       v_3 \ar@{-->}[r]_{\textbf{3}} & v_4 \ar@{-->}[dl]^{\textbf{2}}
       \ar@{-->}[ur]^{\textbf{2}}&\\ 
       v_5 \ar@{-->}[u]^{\textbf{2}} & & }
     \]
     \caption{Another possible execution of \wbmda for
       Example~\ref{example:mirrow}.  The dashed lines show
       the edges which have been used to send messages. }
     \label{fig:mod_ex_2}
   \end{figure}
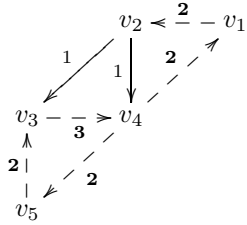
   If after the $4$th round of the execution the agent $ v_4 $ would
   keep updating the weight on the edge to agent $ v_5 $, then the
   algorithm would never converge to a weight-balanced
   digraph. \oprocend }
\end{example}

Next, we investigate the rate of convergence of the \wbmda. 

\begin{proposition}\longthmtitle{Time complexity of the
    \wbmda}\label{proposition:conv_rate_modified_dis}
  The time complexity of the \wbmda is in $ O(n^4) $.
\end{proposition}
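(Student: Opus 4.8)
The plan is to bound the total number of rounds the \wbmda needs to drive the Lyapunov function $\subscr{V}{wb}$ to zero. The natural strategy is to decompose the total running time into two ingredients: (i) how many rounds it takes, starting from any imbalanced configuration, for a strictly positive decrease of $\subscr{V}{wb}$ to occur, and (ii) how many such decreases can happen before reaching a weight-balanced state. The product of these two quantities gives the time complexity, and the claim is that this product is $O(n^4)$.

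First I would analyze ingredient (ii), the number of distinct ``decrease events.'' Since by Lemma~\ref{le:real_rational_ds} we may restrict attention to integer weight assignments whose imbalances are integers, every strict decrease of $\subscr{V}{wb}$ is by at least $2$ (an imbalance token of integer size at least $1$ reaching a vertex with negative imbalance cancels at least one unit on each side). With all edges initially weighted $1$, the initial value of $\subscr{V}{wb}$ is at most $\sum_i |\wdin(v_i)-\dout(v_i)|$, which is $O(n^2)$ since each degree is at most $n-1$. Hence there can be at most $O(n^2)$ decrease events before $\subscr{V}{wb}=0$.

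Next I would analyze ingredient (i), the worst-case number of rounds between consecutive decreases. This is where the \textbf{main obstacle} lies. By the analogue of Lemma~\ref{lemma:min_weight} used in the proof of Theorem~\ref{theorem:modified_distributed}, a positive imbalance token is guaranteed, under the fair-decision rule, to visit every vertex of the strongly connected digraph in finite time, and in particular to reach a vertex of negative imbalance and thereby force a decrease of $\subscr{V}{wb}$. The delicate part is to show that this ``visit everyone'' phase costs only $O(n^2)$ rounds. The fair-decision rule is precisely what controls this: because an agent with several minimum-imbalance out-neighbors must rotate its choice rather than repeatedly sending to the same neighbor, the token cannot be trapped cycling within a proper subset of vertices for more than $O(n)$ rounds before it is forced across the boundary into a newly-visited vertex (this is the mirror-graph refinement of the argument in Lemma~\ref{lemma:min_weight}). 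Each of the at most $n$ boundary crossings needed to exhaust all vertices therefore costs $O(n)$ rounds, giving $O(n^2)$ rounds per decrease event.

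Finally, combining the two estimates yields the bound: at most $O(n^2)$ decrease events, each separated by at most $O(n^2)$ rounds, for a total time complexity of $O(n^2)\cdot O(n^2)=O(n^4)$. I expect the fair-decision counting in ingredient (i) to be the crux of the argument — one must argue carefully that the rotation rule prevents the pathological non-convergent behavior illustrated at the end of Example~\ref{example:mirrow} and instead guarantees the linear-per-stage progress that makes the visiting phase $O(n^2)$ rather than unbounded.
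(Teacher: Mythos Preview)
Your decomposition is genuinely different from the paper's, and there is a gap in ingredient~(i).

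The paper factors the bound as $O(n^3)\times O(n)$, not $O(n^2)\times O(n^2)$. It tracks the single maximum positive-imbalance token of size $r$ and argues that, to advance one hop along the longest path toward a negative-imbalance vertex, the token may first be sent by the current vertex into as many as $O(n)$ ``wrong'' out-neighbors (one per application of the fair-decision rule), each time traversing a cycle of length $O(n)$ before returning; with $O(n)$ hops on the path this gives $O(n)\cdot O(n)\cdot O(n)=O(n^3)$ rounds for the token to reach one vertex of imbalance~$-1$. Since $r$, the maximum single-vertex imbalance under the all-ones initial assignment, is $O(n)$, only $O(n)$ such serial deliveries are needed (other positive tokens move in parallel), yielding $O(n^4)$.

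Your ingredient~(i) asserts that the fair-decision rule forces a token to cross any boundary of a visited set in $O(n)$ rounds, hence to visit everyone in $O(n^2)$ rounds. This is where the argument breaks: the fair-decision rule only rotates among out-neighbors that are \emph{tied} for minimum imbalance; it does nothing when some out-neighbor inside $S$ has strictly smaller imbalance than every out-neighbor in $V\setminus S$. You therefore have no mechanism preventing the token from circulating inside $S$ for (out-degree)$\times$(cycle length)$=O(n^2)$ rounds before a single boundary crossing, exactly the $O(n^3)$ the paper computes. If you patch ingredient~(i) to $O(n^3)$ and keep your ingredient~(ii) at $O(n^2)$ decrease events (the total $\subscr{V}{wb}$), you obtain only $O(n^5)$. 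To recover $O(n^4)$ you would also have to replace the $O(n^2)$ count of decrease events by the $O(n)$ bound the paper uses, i.e., bound the number of \emph{sequential} deliveries by the maximum single-vertex imbalance rather than by $\subscr{V}{wb}$, which is precisely the paper's route.
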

\begin{proof} 
  Let $ G=(V,E) $ be a strongly connected digraph and consider the
  agent $ v_i\in V $ which initially has the maximum positive
  imbalance $ \omega(v_i)=r \in \mathbb{R}_{>0} $.  In the worst-case
  scenario, this imbalance should reach $ r $ agents with negative
  imbalance of $ -1 $ and it has to go through the longest path in
  order to reach the first agent with negative imbalance $ -1 $ and,
  after that, through the longest path to get to the second agent with
  negative imbalance and so on.  According to the execution of the
  \wbmda, agent $ v_i \in V $ updates the weight on the edge to an
  out-neighbor $ v_{i+1} \in V $. Then, agent $ v_{i+1} $ might pass
  the imbalance to the next agent in the longest path directly, or
  after sending it to a cycle which does not include any agent with
  negative imbalance.  Then, after some finite time, the agent $
  v_{i+1} $ will receive the positive imbalance $ r $ again and by the
  fair-decision rule, this time she will pick a different
  out-neighbor. Now, suppose that all the agents in this longest path
  have the maximum number of out-neighbors and furthermore, suppose
  that all the agents try all their other out-neighbors (via cycles)
  before finding the correct out-neighbor. Since the longest path, the
  maximum length of a cycle, and the maximum out-degree are all in $
  O(n) $, the time complexity of the positive imbalance $ r $ reaching
  the first agent with negative imbalance $-1$ is in $ O(n^3) $.
  Since the imbalance $ r $ is in $ O(n) $, the time complexity of
  \wbmda is in $ O(n^4) $.
\end{proof}

The result stated in
Proposition~\ref{proposition:conv_rate_modified_dis} is to be
contrasted with the time complexity of the centralized algorithm
proposed in~\cite{LHT:70} to construct a weight-balanced digraph,
which can be shown to be in $O(2^{n^2})$.

\section{Strategies for making a digraph doubly
  stochastic}\label{section:ds_dist}

In this section, we investigate the design of cooperative strategies
running on strongly connected communication networks that allow agents
to determine a set of edge weights that make the digraph doubly
stochastic.

\subsection{The \dslmo}\label{subsection:dis_ds_self-loop}
From Corollary~\ref{corollary:3_adding_loop}, we know that all
strongly connected digraphs can be made doubly stochastic by allowing
the addition of self-loops to the digraph.  Therefore, combining this
observation with the \wbda, one can find a distributed strategy that
allows the agents to make any strongly connected communication network
doubly stochastic, as we state next.

\begin{theorem}\longthmtitle{\dslmo}
  Let $ G$ be a strongly connected digraph. If the agents
  \begin{enumerate}
  \item execute the \wbda,
  \item compute the maximum out-degree,
  \item if necessary, add a self-loop with appropriate weight to make
    their out-degrees equal to the maximum out-degree, and finally,
  \item divide the weights on each out-edge by the max out-degree,
  \end{enumerate}
  then the resulting weighted digraph is doubly stochastic.
\end{theorem}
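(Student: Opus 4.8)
The plan is to track the adjacency matrix produced at the end of each of the four steps and verify that the matrix entering step~(iv) satisfies the hypotheses of Theorem~\ref{theorem:dsable_wbs}, the characterization of when the row-stochastic normalization of a weight-balanced matrix is doubly stochastic. First I would invoke Theorem~\ref{theorem:main}: since $G$ is strongly connected, the execution of the \wbda in step~(i) terminates in finite time with a weight-balanced adjacency matrix $A=(a_{ij})\in\Adj(G)$, so that $\wdout(v_i)=\wdin(v_i)$ for every $i$. As $A$ is an adjacency matrix of the strongly connected digraph $G$, it is irreducible.

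Next I would analyze steps~(ii)--(iii). Let $C=\wdoutmax=\max_i \wdout(v_i)$; this is strictly positive, since strong connectivity forces each vertex to have at least one out-edge, which the \wbda keeps at a positive weight. For each vertex $v_i$ I add a self-loop of weight $s_i=C-\wdout(v_i)\ge 0$, i.e.\ I replace $a_{ii}$ by $a_{ii}+s_i$. The key observation is that a self-loop $(v_i,v_i)$ contributes equally to $\wdout(v_i)$ and $\wdin(v_i)$, so this modification raises both by the same amount $s_i$ and hence \emph{preserves weight-balance}. After step~(iii) the resulting matrix $B=(b_{ij})$ therefore satisfies $\sum_j b_{ij}=C$ for all $i$ by construction, and, being weight-balanced, also $\sum_j b_{ji}=C$ for all $i$. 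Adding diagonal entries does not alter strong connectivity, so $B$ remains irreducible; in the language of the excerpt, $(G,B)$ is now a $C$-regular digraph.

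Finally, step~(iv) divides every out-edge weight, self-loops included, by $C$; this is precisely the row-stochastic normalization map $\phi$ applied to $B$, because all row sums of $B$ equal $C$. Since $B$ is an irreducible weight-balanced matrix whose row sums are all equal to the constant $C>0$, Theorem~\ref{theorem:dsable_wbs} guarantees that $\phi(B)=\frac{1}{C}B$ is doubly stochastic, which is the desired conclusion.

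The computation is routine once the bookkeeping is in place; the single point requiring care—and the conceptual heart of the argument—is the observation in step~(iii) that self-loop padding leaves all imbalances untouched while equalizing the out-degrees. This is exactly what converts the merely weight-balanced matrix of step~(i) into a $C$-regular one without destroying the balance already achieved, so that the normalization in step~(iv) produces column sums equal to $1$ in addition to the obvious row sums equal to $1$.
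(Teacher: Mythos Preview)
Your proof is correct and follows precisely the approach the paper intends: the paper does not supply a formal proof but instead remarks that the result follows by combining Theorem~\ref{theorem:main} (convergence of the \wbda to a weight-balanced matrix) with the self-loop argument underlying Corollary~\ref{corollary:3_adding_loop} and Theorem~\ref{theorem:dsable_wbs}. Your write-up simply makes this explicit, including the key observation that padding with self-loops preserves weight-balance while equalizing the row sums, so that Theorem~\ref{theorem:dsable_wbs} applies.
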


Note that there a number of distributed ways to perform (ii), see
e.g.,~\cite{NAL:97,DP:00}.

\begin{example}\longthmtitle{Execution of the
    \dslmo}\label{ex:self_loop_addition} 
  {\rm Consider the digraph of Figure~\ref{fig:ex1}(a).
    As we showed, there exists no doubly stochastic adjacency matrix
    associated to this digraph. However, if we allow the agents to
    modify the digraph by adding self-loops, a doubly stochastic
    adjacency matrix can be associated to this digraph. Using the
    \wbda, the agents can compute the weight-balanced adjacency matrix
    \[
    A=\left(\begin{array}{ccccc}
        0 & 6 & 0 &0 & 0\\
        0 & 0 & 3 & 3 & 0\\
        5 & 0 & 0 & 0 & 0\\
        1 & 0 & 1 & 0 & 1\\
        0 & 0 & 1 & 0 &0
      \end{array}\right).
    \]
    Now, if the agents execute the modification of the digraph by
    adding self-loops with appropriate weight, they compute the
    adjacency matrix
    \[
    A=\left(\begin{array}{ccccc}
        0 & 6 & 0 &0 & 0\\
        0 & 0 & 3 & 3 & 0\\
        5 & 0 & 1 & 0 & 0\\
        1 & 0 & 1 & 3 & 1\\
        0 & 0 & 1 & 0 & 5
      \end{array}\right).
    \]
    Finally, by executing a
    division on the weight of the out-edges, the agents compute the
    doubly stochastic adjacency matrix
    \[
    A=\left(\begin{array}{ccccc}
        0 & 1 & 0 &0 & 0\\
        0 & 0 & 1/2 & 1/2 & 0\\
        5/6 & 0 & 1/6 & 0 & 0\\
        1/6 & 0 & 1/6 & 1/2 & 1/6\\
        0 & 0 & 1/6 & 0 & 5/6
      \end{array}\right).
    \eqoprocend
    \]  
  }
\end{example}

\subsection{The \cregular}

Given a digraph $ G=(V,E)$ and $C \in \integerspositive$, in this
section we introduce a strategy, distributed over the mirror digraph,
that, upon completion, allows the group of nodes to declare whether
the graph $G$ is $ C $-regular. If it is, the strategy finds a set of
weights that makes the graph $C$-regular.  Note that, once such
weights have been found, agents can easily determine a doubly
stochastic weight assignment by dividing all entries by $C$.

Our strategy is essentially an adaptation of the Goldberg-Tarjan's
algorithm~\cite{AVG-RET:88} for the problem under consideration. This
point will be clearer when we address the proof of its correctness in
Theorem~\ref{th:cregular}.  Let us start with a formal description of
the strategy.

(Initialization)
\begin{enumerate}
\item[(i)] each agent can send/receive messages to/from her in- and
  out-neighbors. There is an initial round when each agent $v_i \in V$
  assigns a unit weight, denoted $ a_{ij}$, to each out-edge
  $(v_i,v_j)$, and sends this information to her corresponding
  out-neighbor $v_j$. In this way, every agent can initially compute
  her in-degree;

\item[(ii)] each agent $v_i \in V$'s memory contains a \emph{load}
  vector $ (L_s(v_i),L_t(v_i))$, whose entries are termed
  \emph{source-} and \emph{target-load}, respectively, and a
  \emph{height} vector $(H_s(v_i),H_t(v_i)) $, whose entries are
  termed \emph{source-} and \emph{target-height}, respectively. These
  variables are initialized as,
  \begin{alignat*}{2}
    L_s(v_i) & = C-\dout(v_i) , \quad && L_t(v_i) = \din(v_i)-C ,
    \\
    H_s(v_i) & =2 , \quad && H_t(v_i)=1 ;
  \end{alignat*}

\item[(iii)] each agent $ v_i\in V $ sends the initial source-height $
  H_s(v_i) $ to her out-neighbors and the initial target-height $
  H_t(v_i) $ to her in-neighbors.  When agents receive this
  information from their in- and out-neighbors, they compute
  \begin{align*}
    \upscr{H}{max-in}_s(v_i)=\max_{v_j \in \Nin(v_i)} H_s(v_j) ;
  \end{align*}
\end{enumerate}

(Algorithm steps)
\begin{enumerate}

\item[(iv)] at each time step, if $ L_s(v_i)>0 $ and
  \begin{description}

  \item[(\emph{push forward}):] there exists an out-neighbor $ v_j \in
    \Nout(v_i) $, where $ a_{ij}< C $ and $ H_s(v_i) > H_t(v_j) $,
    then agent $ v_i $ sends the load $ L_s(v_i) $ to $ v_j $ and
    resets
    \begin{align*}
      a_{ij}:=a_{ij}+ L_s(v_i) \quad \text{and} \quad L_s(v_i):=0 ;
    \end{align*}

  \item[(\emph{declare digraph not $ C $-regular}):] there exists no
    out-neighbor $ v_j \in \Nout(v_i) $ such that $ a_{ij}< C $ and $
    H_s(v_i) > H_t(v_j) $, agent $ v_i $ announces that the digraph is
    not $ C $-regular and the algorithm terminates.

  \end{description}

\item[(v)] at each time step, if $ L_t(v_i)>0 $ and
  \begin{description}
  \item[(\emph{push backward}):] there exists an in-neighbor $ v_k \in
    \Nout(v_i) $, where $ a_{ki}> L_t(v_i)+1 $ and $ H_t(v_i) >
    H_s(v_k) $, then agent $ v_i $ sends the load $ L_t(v_i) $ to $
    v_k $ and resets
    \begin{align*}
      a_{ki}:=a_{ki}-L_t(v_i) \quad \text{and} \quad L_t(v_i):=0 ;
    \end{align*}
  \item[(\emph{increase target-height}):] there exists no
    in-neighbor $ v_k \in \Nout(v_i) $ such that $ a_{ki}> L_t(v_i)+1
    $ and $ H_t(v_i) > H_s(v_k) $, agent $v_i$ resets
    \begin{align*}
      H_t(v_i):=\upscr{H}{max-in}_s(v_i)+1 ,
    \end{align*}
    and sends a message to her in-neighbors informing them of her new
    target-height.
  \end{description}

\end{enumerate}

We refer to the algorithm described above as the \cregular. Note that
this strategy is distributed over the mirror digraph of $G$. The next
result characterizes its convergence properties.

\begin{theorem}\longthmtitle{The \cregular finds a $ C $-regular
    weight assignment iff the digraph is doubly
    stochasticable}\label{th:cregular}
  Let $ G=(V,E) $ be a digraph and $C \geq \max \{\max_{v \in V}
  \dout(v), \max_{v \in V} \din(v)\}$.  If the \cregular announces
  that $ G $ is not $ C $-regular and $C \geq |E|-|V|+1$, then the
  digraph is not doubly stochasticable.  If $ G $ is doubly
  stochasticable and $ C\geq \ds(G) $, then the \cregular converges to
  a $ C $-regular digraph. In both cases, the algorithm terminates in
  $ O(|V|^2\times |E|)$ steps.
\end{theorem}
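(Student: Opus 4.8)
The plan is to recognize the \cregular as a distributed, mirror-digraph implementation of the Goldberg--Tarjan preflow-push algorithm applied to an auxiliary maximum-flow problem, and then to combine the correctness and complexity of preflow-push with the structural results of Section~\ref{section:top_char}. First I would reduce the $C$-regularity question to a feasible-flow problem. Build a bipartite network on node set $\{s,t\}\cup\{i^{\mathrm{out}}\}_i\cup\{j^{\mathrm{in}}\}_j$, with an arc $s\to i^{\mathrm{out}}$ of capacity $C-\dout(v_i)$, an arc $j^{\mathrm{in}}\to t$ of capacity $C-\din(v_j)$, and, for every $(v_i,v_j)\in E$, an arc $i^{\mathrm{out}}\to j^{\mathrm{in}}$ of capacity $C-1$. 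Writing $a_{ij}=1+x_{ij}$, where $x_{ij}$ is the flow on the arc $i^{\mathrm{out}}\to j^{\mathrm{in}}$, a flow saturating all source and sink arcs is in bijection with a weight assignment $a_{ij}\in\{1,\dots,C\}$ whose row and column sums all equal $C$; by the hypothesis $C\geq\max\{\max_v\dout(v),\max_v\din(v)\}$ all capacities are nonnegative, and total supply equals total demand $|V|\,C-|E|$. Hence $G$ admits a $C$-regular assignment if and only if the maximum flow in this network equals $|V|\,C-|E|$.

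Next I would make the correspondence with the algorithm precise. The \cregular stores at each agent the two excesses $L_s(v_i)$ (excess at $i^{\mathrm{out}}$) and $L_t(v_i)$ (excess at $i^{\mathrm{in}}$) of a preflow, initialized by saturating the source arcs, and the two heights $H_s,H_t$ serving as a distance labeling of the residual network. The \emph{push forward} step is a push along a residual arc $i^{\mathrm{out}}\to j^{\mathrm{in}}$ (admissible exactly when $a_{ij}<C$ and $H_s(v_i)>H_t(v_j)$); the \emph{push backward} step is a push along the reverse residual arc $j^{\mathrm{in}}\to k^{\mathrm{out}}$ (admissible when $a_{ki}>L_t(v_i)+1$, which keeps $a_{ki}$ positive, and $H_t(v_i)>H_s(v_k)$); and \emph{increase target-height} is the relabel operation, which sets $H_t(v_i):=\upscr{H}{max-in}_s(v_i)+1$ according to the standard ``minimum admissible neighbor plus one'' rule. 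The core invariant to carry through the induction is that $(H_s,H_t)$ is always a valid labeling, so that pushes occur only along admissible arcs and relabels preserve validity. From this, the \emph{declare digraph not $C$-regular} clause---triggered when some $i^{\mathrm{out}}$ retains excess $L_s(v_i)>0$ yet has no admissible residual out-arc---should exhibit a saturated cut separating this excess from $t$, certifying that the maximum flow is strictly below $|V|\,C-|E|$, i.e.\ that $G$ is not $C$-regular.

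With the flow correspondence in place, the three assertions follow by assembling earlier results. If $G$ is doubly stochasticable and $C\geq\ds(G)$, then Proposition~\ref{prop:C_char} guarantees that a $C$-regular assignment exists, so the maximum flow saturates; preflow-push then terminates with a feasible flow, and the algorithm returns the corresponding $a_{ij}\in\{1,\dots,C\}$, which is $C$-regular (and, dividing by $C$, doubly stochastic). For the converse, if the algorithm announces ``not $C$-regular'' with $C\geq|E|-|V|+1$, then $G$ is not $C$-regular-able; were $G$ doubly stochasticable, Lemma~\ref{lemma:DS_character_bound} would give $\ds(G)\leq|E|-|V|+1\leq C$, and Proposition~\ref{prop:C_char} would then render $G$ $C$-regular, a contradiction---so $G$ is not doubly stochasticable (reducing to strongly connected components as in Corollary~\ref{corollary:main} if needed). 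The complexity bound comes from the generic $O(V'^2E')$ operation count of preflow-push on a network with $V'=O(|V|)$ nodes and $E'=O(|E|+|V|)=O(|E|)$ arcs (using $|E|\geq|V|$ for strongly connected $G$), giving $O(|V|^2\times|E|)$ steps in both the feasible and infeasible cases, with the synchronous rounds over the mirror digraph dominated by the same count.

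The step I expect to be the main obstacle is making the \emph{declare digraph not $C$-regular} condition rigorously equivalent to flow infeasibility in this restricted setting. Because source-heights are never relabeled (they remain at $2$) while target-heights only ever rise to $\upscr{H}{max-in}_s(v_i)+1=3$, the admissible height range is the small set $\{1,2,3\}$, and I must verify that this labeling is nonetheless rich enough to (i) never block a push that feasibility would require---so that no spurious ``not $C$-regular'' is announced when the flow is in fact feasible---and (ii) always block when infeasible, producing the certifying saturated cut via a Hall-type argument on the set of relabeled targets. Establishing the valid-labeling invariant under the synchronized message passing and checking the off-by-one bookkeeping that keeps every $a_{ij}$ in $\{1,\dots,C\}$ throughout are the remaining technical points; once these are settled, correctness and the $O(|V|^2\times|E|)$ bound are inherited directly from the Goldberg--Tarjan analysis.
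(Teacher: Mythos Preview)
Your proposal is correct and follows essentially the same route as the paper: reduce $C$-regularity to a bipartite maximum-flow problem (with lower bounds removed via the shift $a_{ij}=1+x_{ij}$), identify the \cregular as a distributed Goldberg--Tarjan preflow-push on that network, and then invoke Proposition~\ref{prop:C_char}, Lemma~\ref{lemma:DS_character_bound}, and the $O(|V|^2|E|)$ bound from~\cite{AVG-RET:88}. The obstacle you flag---verifying that the restricted height range $\{1,2,3\}$ still yields a valid labeling so that the ``not $C$-regular'' announcement is sound---is not argued in the paper either; it simply asserts that the transcription ``exactly corresponds'' to the distributed preflow-push of~\cite{TLP-IL-MB-SHD:05} and defers correctness there.
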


\begin{proof}
  We start by showing that the problem of finding a $ C $-regular
  digraph can be reduced to a maximum flow problem with positive lower
  bounds on the edge
  loads~\cite{RKA-TLM-JBO:93,CB-ASR-AL:05}. Consider the bipartite
  digraph $ \tilde{G} = (\tilde{V},\tilde{E})$, where
  \begin{itemize}
  \item $ \tilde{V} $ contains two copies of $ V $, named $V_u$ and
    $V_w$, a source node $ s $, and a target node $ t $, i.e.,
    \[
    \tilde{V}= \{s \} \cup V_u \cup V_w \cup \{ t\} .
    \]
    In other words, the nodes $u_i \in V_u$ and $w_i \in V_w$
    correspond to the agent $v_i \in V$;
  \item $ \tilde{E} $ is defined as follows: there is no edge between
    vertices in $V_u $ and there is no edge between vertices in $ V_w
    $. For each edge $ (v_i,v_j) $ in $ E $, there exists an edge $(
    u_i,w_j) \in \tilde{E}$; there is an out edge between $ s $ to all
    $ u_i$ in $V_u $ and an out-edge from each $ w_i$ in $V_w $ to $ t
    $.
  \end{itemize}
  Next, let us define a maximum flow problem on $ \tilde{G} $. Let the
  capacity on each edge of the form $ (s,u_i)$ or $(w_j,t) \in
  \tilde{E} $, $ i,j\in \{1,\ldots, |V|\} $ be exactly $ C $.  Let the
  capacity on each edge $ (u_i,w_j) \in \tilde{E}$ be lower bounded by
  $ 1 $ and upper bounded by $C$. We refer to the weight of such edges
  as $ \tilde{a}_{ij} $, and therefore, $ 1 \leq \tilde{a}_{ij} \leq
  C$.  Consider the following problem: find the maximum flow that can
  be sent from the source $ s $ to the target $ t $.  It is not
  difficult to see that, by definition of $ C $-regularity, the
  digraph $ G $ is $ C $-regular if and only if the maximum flow of
  the problem just introduced is $C |V| $.

  Following~\cite{CB-ASR-AL:05, RKA-TLM-JBO:93}, the maximum flow
  problem with lower bounds described above can be transformed into a
  regular maximum flow problem as follows.  Define $
  \bar{a}_{ij}=\tilde{a}_{ij}-1 $. The bounds $ 1 \leq \tilde{a}_{ij}
  \leq C$ now become $ 0 \leq \bar{a}_{ij} < C $. Let the capacity on
  each edge of the form $(s,u_i) \in \tilde{E}$ be
  \begin{align*}
    C -\sum_{(u_i,w_j) \in \tilde{E}}1 = C - \dout(v_i) .
  \end{align*}
  Let the capacity on each edge of the form $(w_i,t) \in \tilde{E}$ be
  \begin{align*}
    -C +\sum_{(u_j,w_i) \in \tilde{E}}1 = -C + \din(v_i) .
  \end{align*}
  The proof now follows from noting that the execution of the
  \cregular, when transcribed to this maximum flow problem, exactly
  corresponds to the execution of the distributed version presented
  in~\cite{TLP-IL-MB-SHD:05} of the preflow-push algorithm of
  Goldberg-Tarjan algorithm~\cite{AVG-RET:88}.
  Note that, given our discussion in
  Section~\ref{section:doubly_stochastic_sec}, it is enough to execute
  \cregular for $ C \geq |E|-|V|+1$
  (cf. Lemma~\ref{lemma:DS_character_bound}) to determine if the
  digraph $G$ is indeed doubly stochasticable. The time complexity of
  the algorithm is a consequence of~\cite[Theorem 3.11]{AVG-RET:88}.
\end{proof}

It is worth mentioning that without the characterization of doubly
stochasticable digraphs, a negative answer from \cregular for a given
$C$ would be inconclusive to determine if the digraph is indeed doubly
stochasticable. At the same time, the results of
Section~\ref{section:doubly_stochastic_sec} imply that, if a digraph
is doubly stochasticable and the \cregular is executed with $C \geq
\ds(G)$ (which in particular is guaranteed if $C \geq |E|-|V|+1$),
then the strategy will find a set of appropriate weights.

\begin{figure}[hbt]
  \[
  \xymatrix{v_1 \ar[rr]  &&  v_2 \ar[dd] \ar[ddll] \\
    &&\\
    v_4 \ar[uu] \ar@/_/[rr] && v_3 \ar[ll] \ar[lluu] }
  \]
  \caption{A doubly stochasticable digraph with $
    \ds(G)=2$.}\label{fig:cregular_ex}
\end{figure}

\begin{example}\longthmtitle{Execution of the
    \cregular}\label{example:cregular_ex}
  {\rm Consider the digraph shown in Figure~\ref{fig:cregular_ex}.
  One can easily check that this digraph is doubly stochasticable and
  $ \ds(G)=2 $. Suppose the agents want to find a $C$-regular weight
  assignment and they run the \cregular for $C=3 \geq \ds(G) =2$.
    Figure~\ref{fig:cregular_ex_exe} shows the execution of the
    algorithm. To represent its evolution, we associate to each vertex
    $ v_i $, $ i\in \{1,\ldots, 4\} $, the source- and target-loads
    and a subindex with the source- and target-height.  The algorithm
    converges to a $ 3 $-regular digraph in $ 5 $ iterations.  }
  \oprocend

  \begin{figure}[htb]
    \[
    \xymatrix{ \ (1): \
      (2,-1)_{(2,1)} \ar[rr]^{1}  & &(1,-2)_{(2,1)} \ar[dd]^1 \ar[ddll]^{\ \ \ \ \ \  1} \\
      & &\\
      \ \ \ \ \ \ \ (1,-1)_{(2,1)} \ar[uu]^1 \ar@/_/[rr]_1 & &
      (1,-2)_{(2,1)} \ar[ll]_1 \ar[lluu]_{\ \ \ \ \ \ 1} } \qquad 
      \xymatrix{ (2):
      \
      (0,-1)_{(2,1)} \ar@{-->}[rr]^{3}  & &(0,0)_{(2,1)} \ar[dd]^1 \ar@{-->}[ddll]^{\ \ \ \ \ \  2} \\
      & &\\
      \ \ \ \ \ \ \ (0,1)_{(2,1)} \ar[uu]^1 \ar@/_/@{-->}[rr]_2 & &
      (0,0)_{(2,1)} \ar@{-->}[ll]_2 \ar[lluu]_{\ \ \ \ \ \ 1} }
    \]
    \[
    \xymatrix{
      (3): \ 
      (0,-1)_{(2,1)} \ar[rr]^{3}  & &(0,0)_{(2,1)} \ar[dd]^1 \ar[ddll]^{\ \ \ \ \ \  2} \\
      & &\\ 
      \ \ \ \ \ \ \ (0,1)_{(2,\textbf{3})} \ar[uu]^1 \ar@/_/[rr]_2 & & (0,0)_{(2,1)} \ar[ll]_2 \ar[lluu]_{\ \ \ \ \ \  1} }
    \qquad     
    \xymatrix{
      \ \ \ (4): \ 
      (0,-1)_{(2,1)} \ar[rr]^{3}  & &(0,0)_{(2,1)} \ar[dd]^1 \ar[ddll]^{\ \ \ \ \ \   2} \\
      & &\\ 
      \ \ \ \ \ \ \ (0,0)_{(2,3)} \ar[uu]^1 \ar@/_/[rr]_2 & & (1,0)_{(2,1)} \ar@{-->}[ll]_1 \ar[lluu]_{\ \ \ \ \ \  1} }
    \]
    \[
    \xymatrix{
      (5): \ 
      (0,0)_{(2,1)} \ar[rr]^{3}  & &(0,0)_{(2,1)} \ar[dd]^1 \ar[ddll]^{\ \ \ \ \ \   2} \\
      & &\\ 
      \ \ \ \ \ \ \ (0,0)_{(2,3)} \ar[uu]^1 \ar@/_/[rr]_2 & & (0,0)_{(2,1)} \ar[ll]_1 \ar@{-->}[lluu]_{\ \ \ \ \ \    2} }
    \]
    \caption{The execution of the \cregular for the digraph in
      Figure~\ref{fig:cregular_ex}. At each iteration of the
      algorithm, the pair $ (L_s,L_t)_{(H_s,H_t)} $ is shown for each
      vertex. At each iteration, dashed lines represent the edges used
      by the vertices to send loads. Observe that~(2),~(3),~(4),~(5),
      respectively, correspond to the operations push forward,
      increasing target height, push backward, and push forward.}
    \label{fig:cregular_ex_exe}
  \end{figure}
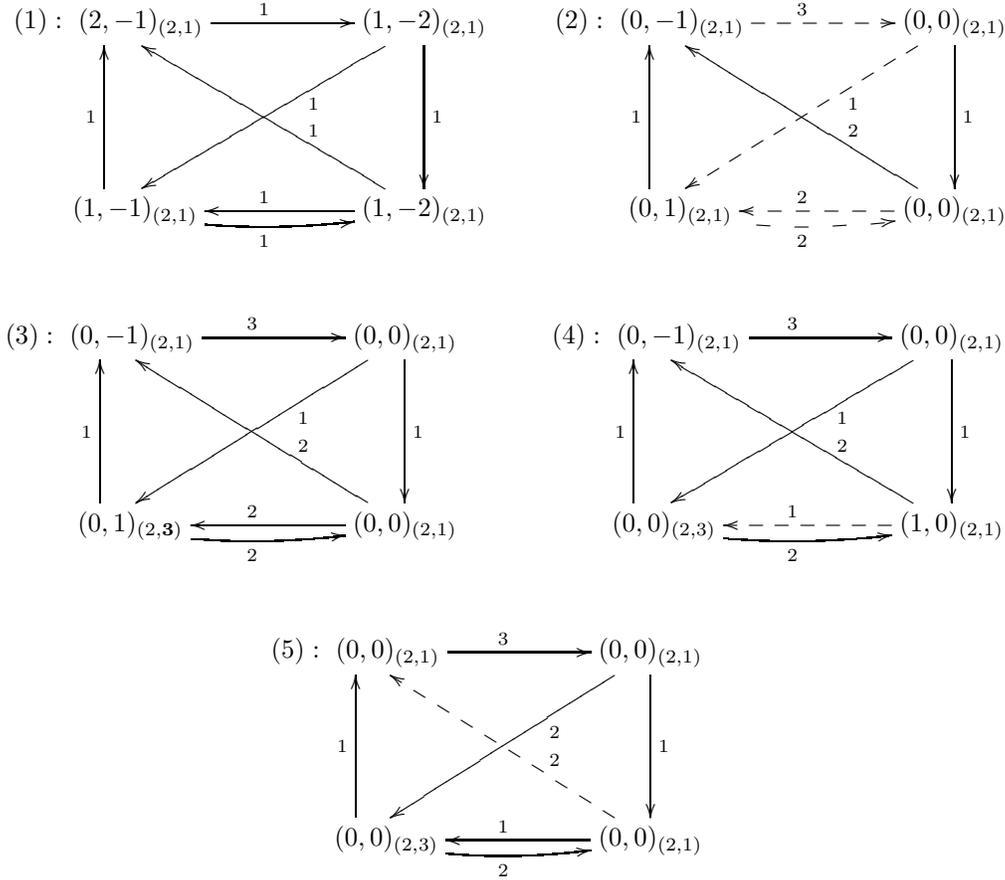
\end{example}

\section{Conclusions}\label{sec:conclusion_sec}

In this paper, we have fully characterized the properties of two
important classes of digraphs: weight-balanced and doubly stochastic
digraphs. Our results greatly enlarge the domain of systems for which
a variety of distributed formation and optimization algorithms can be
executed.  The first contribution is a necessary and sufficient
condition for doubly stochasticability of a digraph.  We have unveiled
the particular connection of the class of doubly stochasticable
digraphs with a special subset of weight-balanced digraphs. The second
contribution is the design of two discrete-time dynamical systems for
constructing a weight-balanced digraph from a strongly connected
digraph: (i) the \wbda, running synchronously on a group of agents,
and (ii) the \wbmda, distributed over the mirror of the original
digraph.  We have established the finite-time convergence of both
strategies via the set-valued discrete-time LaSalle Invariance
Principle.
We have also characterized the time complexity of the \wbmda, which is
substantially better than that of the existing centralized algorithm.
The third contribution is the design of two discrete-time dynamical
systems for constructing a doubly stochastic adjacency matrix for a
doubly stochasticable strongly connected digraph: (i) the \dslmo,
works under the assumption that agents are allowed to add weighted
self-loops. We showed that any strongly connected digraph can be
assigned a doubly stochastic adjacency matrix with this procedure;
(ii) the \cregular, distributed over the mirror digraph, for the case
when agents are not allowed to add self-loops.  The convergence of
this algorithm, and its time complexity, has been established by
formulating the problem as a constrained maximum flow problem.

Regarding future work, we would like to better understand the gap
between $\ds(G)$ and $p(G)$ for doubly stochasticable digraphs. We
would also like to study the case when zero edge weights are allowed
and, in particular, develop algorithmic procedures that can identify a
strongly connected spanning subdigraph which is doubly stochasticable.
To our knowledge, the synthesis of a distributed dynamical system that
computes doubly stochastic weight assignments when agents communicate
only over the original digraph, and not over the mirror digraph, is
still an open problem.  Finally, we would like to employ the results
of this paper to extend the range of applicability of existing
distributed algorithms for coordination, formation control, and
optimization tasks.



\end{document}